\theoremstyle{plain}
\newtheorem{theorem}{Theorem}[section]
\newtheorem{corollary}{Corollary}[section]
\theoremstyle{definition}
\newtheorem{definition}{Definition}[section]
\newtheorem{observation}{Observation}[section]
\tikzstyle{vertex}=[circle, draw, inner sep=1pt, minimum size=4pt]
\newcommand{\vertex}{\node[vertex]}
\tikzstyle{ann} = [fill=white,font=\footnotesize,inner sep=1pt]
\tikzstyle{arrow} = [thick,<-->,>=stealth]
\date{}
\begin{document}

\title{Characterizations of Some Parity Signed Graphs}

\author{Mukti Acharya$^1$, Joseph Varghese Kureethara$^2$, Thomas Zaslavsky$^3$\\
	\small $^{1,2}$Centre for Mathematical Needs, Christ University, Bengaluru, Karnataka, India;\\
	\small $^1$mukti1948@gmail.com, $^2$frjoseph@christuniversity.in \\
	\small $^3$Department of Mathematical Sciences, Binghamton University,\\ \small Binghamton, New York, U.S.A.; zaslav@math.binghamton.edu\\
}
\date{}

\maketitle

\begin{abstract}
We describe parity labellings of signed graphs; equivalently, cuts of the underlying graph that have nearly equal sides. We characterize the balanced signed graphs which are parity signed graphs. We give structural characterizations of all parity signed stars, bistars, cycles, paths and complete bipartite graphs. The \emph{rna} number of a graph is the smallest cut size that has nearly equal sides; we find it for a few classes of graphs.\\
\textbf{Mathematics subject classifications (2020):} 05C78, 05C22, 11Z05\\
\textbf{Keywords}: Graph labelling, Parity sign labelling, Parity signed Graph, Balanced signed graph
\end{abstract}

\section{Introduction}
The concept of signed graph has gained immense popularity in graph theory in the recent decades. Here, we discuss a type of signed graph called a parity signed graph, introduced recently in \cite{Ach}. This is based on the assignment of consecutive positive integers to the vertices of a graph; it is equivalent to a partition of the vertex set of a graph into two subsets, $A$ and $B$, that are as nearly the same size as can be, i.e., such that $|A|-|B|=0,\pm1$. From the standpoint of signed graphs, we wish to know whether a given signed graph is parity signed; we answer that question for signed stars, bistars, cycles, paths and complete bipartite graphs.  We further examine the \emph{rna} number of a graph, which is the size of a smallest cut whose sides are nearly equal, for some types of graph such as stars, wheels, paths, and cycles. (The term \emph{rna} is the Sanskrit word for debt.)

For terminology for graphs we refer to \cite{har69, West} and for signed graphs we refer to \cite{Zas82}. For a detailed conceptual framework in signed graphs, we refer the reader to \cite{Zas05}. All graphs and signed graphs considered here are simple and connected unless mentioned otherwise.

By an $(n,m)$-graph, $G=(V,E)$, we mean a graph $G$ such that $n=|V(G)|$ and $m=|E(G)|$. A signed graph $S=(G, \sigma)$ is a pair of a graph $G=(V,E)$ and a function $\sigma:E(G)\rightarrow \{+,-\}$; edges which receive + and $-$ signs are called positive and negative edges of $S$, respectively. The graph $G$ is called the underlying graph of $S$. By $E^+(S)$ ($E^-(S)$), we denote the set of positive (negative) edges of $S$, so the edge set $E(S)=E^+(S) \cup E^-(S)$. A signed graph is said to be all-positive if $E^-(S)=\emptyset$ and all-negative if $E^+(S)=\emptyset$. While drawing a signed graph, positive edges are drawn as solid line segments and negative edges as dashed line segments, as depicted in Figure \ref{fig:pscycle}. A signed graph is said to be homogeneous if it is either all positive or all negative, and heterogeneous otherwise. By a positive (negative) homogeneous signed graph we mean a signed graph which is all positive (all negative).

Now we give definitions and results which are needed for our work.

\begin{definition}\label{parsign}
	Given a graph $G$ and a bijection $f:V(G) \rightarrow \{1, 2, \dots, n\}$, we define $\sigma_f: E(G) \rightarrow   \{+,-\}$ such that for an edge $uv$ in $G$, $\sigma(uv)=+$ if $f(u)$ and $f(v)$ are of the same parity and $\sigma(uv)=-$ if $f(u)$ and $f(v)$ are of opposite parity.  We define $S_f$ to be the signed graph $(G,\sigma_f)$.
\end{definition}

\begin{definition}\label{par} \cite{Ach}
	A signed graph $S=(G, \sigma)$ is a \textit{parity signed graph}, if there exists a bijection $f:V(G) \rightarrow \{1, 2, \dots, n\}$ such that $\sigma = \sigma_f$.
\end{definition}

\begin{definition}\cite{Ach}\label{rna}
	For a graph $G$, the \emph{rna} (\emph{adhika}) number of $G$, denoted by $\sigma^{-}(G)$ ($\sigma^{+}(G)$), is the cardinality of the smallest $E^{-}(S_f)$ (the largest $E^{+}(S_f)$) under all the possible bijective label assignments $f:V(G) \rightarrow \{1, 2, \dots, n\}$. (The word \emph{adhika} in Sanskrit means excess.)
\end{definition}

The motivation behind the study of parity signed graphs is primarily sociological, as was originally the case of the study of signed graphs itself. Assume that there exist two types of people in an office, divided by their distinct languages. Assume also that only people of the same language get along well. To allot workspace to each employee in that office, the personnel manager makes sure that a minimum discomfort is surfaced due to their proximities. Here, we can model cabins as the vertices of a graph and languages as odd and even positive integers. The \emph{rna} number gives the least level of discomfort in integer terms, i.e., the smaller the \emph{rna} number is, the less the discomfort is.

Further, in any system where there are objects basically binary in nature such as male and female, native and foreigner, positive and negative, good and evil, etc., we can bring in the ideas of parity signed graphs.

\begin{definition}
	In a signed graph $S=(G, \sigma)$, a \emph{positive (negative) section} is a maximal connected sub signed graph of $S$ with all positive (negative) edges. In Figure \ref{img1} we have a signed graph with two positive sections and two negative sections. 
\end{definition}

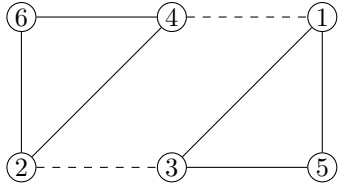
\begin{figure}[h!]\label{fig:pscycle}
	\centering
	\begin{tikzpicture}
	\vertex (u2) at (0,0) {2};
	\vertex (u3) at (2,0)  {3};
	\vertex (u5) at (4,0)  {5};
	\vertex (u1) at (4,2) {1};
	\vertex (u4) at (2,2)  {4};
	\vertex (u6) at (0,2) {6};	
	\path 
	(u1) edge (u3)
	(u3) edge  (u5)
	(u1) edge (u5)
	(u6) edge  (u4)
	(u2) edge (u6)
	(u2) edge (u4)
	
	(u2) edge [dashed] (u3)
	
	(u4) edge [dashed] (u1)
	;
	\end{tikzpicture}
	\caption{A parity signed graph with two negative sections and two positive sections.}\label{img1}
\end{figure}

If one is interested in the labels themselves, the following observations give ways to generate a new parity labelling from an existing parity labelling in a given parity signed graph.
\begin{observation}
	Let $f(v_i)=i$ be the labels of a parity signed graph $S$ with $n$ vertices.  Then $f(v_i)=n+1-i$ is another parity labelling of $S$. 
\end{observation}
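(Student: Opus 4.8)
The plan is to show that if $f(v_i) = i$ is a parity labelling of $S$, then the complementary labelling $g(v_i) = n+1-i$ induces exactly the same sign function $\sigma_f = \sigma_g$, so that $g$ is also a parity labelling of the same signed graph $S$. Since $g$ is clearly a bijection onto $\{1, 2, \dots, n\}$ (it simply reverses the order of the labels), the only thing to verify is that the signs of every edge are preserved.

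First I would recall from Definition \ref{parsign} that the sign of an edge $v_i v_j$ depends only on whether $f(v_i)$ and $f(v_j)$ have the same parity or opposite parity. So the heart of the argument is the following parity observation: for any label $i \in \{1, \dots, n\}$, the parities of $i$ and of $n+1-i$ are related in a fixed, uniform way that depends only on the parity of $n+1$ (equivalently, on the parity of $n$). Concretely, $i + (n+1-i) = n+1$, so $i$ and $n+1-i$ have the same parity precisely when $n+1$ is even, i.e.\ when $n$ is odd, and opposite parity precisely when $n$ is even.

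The key step is then to track what this does to an edge. Take any edge $v_iv_j \in E(G)$. Its sign under $f$ is determined by the parity of $i+j$ (positive iff $i+j$ is even); its sign under $g$ is determined by the parity of $(n+1-i)+(n+1-j) = 2(n+1) - (i+j)$. Since $2(n+1)$ is even, $(n+1-i)+(n+1-j)$ has the same parity as $i+j$. Hence $f(v_i), f(v_j)$ are of the same parity if and only if $g(v_i), g(v_j)$ are of the same parity, so $\sigma_g(v_iv_j) = \sigma_f(v_iv_j)$ for every edge. The uniform parity shift on each vertex therefore cancels out pairwise on each edge, regardless of whether $n$ is even or odd.

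I do not expect any serious obstacle here; the statement is essentially an elementary parity computation, and the main point is simply to phrase it cleanly in terms of the sum $i+j$ being invariant in parity under the complementation $i \mapsto n+1-i$. The only thing worth stating carefully is that $g$ remains a bijection onto $\{1, \dots, n\}$, which is immediate since $i \mapsto n+1-i$ is an involution of that set. Thus $g = \sigma_g$-labels $S$ with $\sigma_g = \sigma_f = \sigma$, and $g$ is a genuinely different labelling from $f$ whenever $n \geq 2$.
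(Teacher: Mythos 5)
Your proof is correct: the paper states this observation without proof, and your parity-sum argument (the sign of $v_iv_j$ depends only on the parity of $i+j$, which is invariant under $i \mapsto n+1-i$ since $(n+1-i)+(n+1-j) = 2(n+1)-(i+j)$) is exactly the elementary computation the paper implicitly relies on. The only cosmetic remark is that your opening aside about whether $i$ and $n+1-i$ themselves agree in parity (depending on the parity of $n$) is not needed once you pass to the edge-level sum, as your own ``key step'' shows.
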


\begin{observation}
	Let $O$ ($E$) be the set of odd (even) labelled vertices of a parity signed graph. Then, every permutation of labels on $O$ ($E$) gives a parity labelling for the parity signed graph. 
\end{observation}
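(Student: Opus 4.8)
The plan is to exploit the fact, immediate from Definition~\ref{parsign}, that the sign $\sigma_f(uv)$ of any edge $uv$ is determined solely by the \emph{parities} of $f(u)$ and $f(v)$, not by their actual values. Thus, to show that a relabelling is again a parity labelling inducing the same signed graph, it suffices to check that the relabelling preserves the parity of the label at every vertex and remains a bijection onto $\{1,\dots,n\}$.

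First I would set up notation. Since $f$ is a bijection, the set $O$ of odd-labelled vertices satisfies that $f(O)$ is precisely the set of odd integers in $\{1,\dots,n\}$, and likewise $f(E)$ is the set of even integers in $\{1,\dots,n\}$. Given any permutation $\pi$ of $f(O)$ (that is, any rearrangement of the odd labels among the vertices of $O$), I would define a new map $f'$ by $f'(v)=\pi(f(v))$ for $v\in O$ and $f'(v)=f(v)$ for $v\in E$.

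Next I would verify the two required properties. Because $\pi$ permutes the odd integers in the range and leaves the even integers fixed, $f'$ is a bijection onto $\{1,\dots,n\}$; and for every vertex $v$ the label $f'(v)$ has the same parity as $f(v)$, since each $v\in O$ keeps an odd label and each $v\in E$ keeps its even label. Consequently, for every edge $uv$ the pair $(f'(u),f'(v))$ agrees in parity relation with $(f(u),f(v))$, so $\sigma_{f'}(uv)=\sigma_f(uv)$. Hence $\sigma_{f'}=\sigma_f=\sigma$, and $f'$ is a parity labelling of $S$ by Definition~\ref{par}. The identical argument applies to permutations of the even labels, and the two operations commute, so any combination of a permutation on $O$ with one on $E$ again yields a parity labelling.

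I do not expect a genuine obstacle here: the statement is essentially a restatement of the parity-only dependence of $\sigma_f$. The only point requiring care is confirming that the relabelling stays bijective and parity-preserving, which is exactly the verification step above; everything else follows directly from the definitions.
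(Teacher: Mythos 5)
Your proof is correct: the paper states this as an observation without any written proof, and your argument --- that $\sigma_f$ depends only on the parities of the labels, so a permutation within $O$ (or $E$) preserves bijectivity and every vertex's label parity, hence fixes $\sigma_f$ --- is precisely the justification the paper implicitly relies on. Nothing is missing; the verification of bijectivity and parity preservation is the whole content.
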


\section{Characterization of Parity Signed Graphs}

One important exploration in the study of signed graphs is about their balanced nature. Harary introduced this idea in \cite{har}. A signed graph is balanced if every cycle in it has an even number of negative edges.
\begin{theorem}\label{thm_balanced}
	Every parity signed graph is balanced.
\end{theorem}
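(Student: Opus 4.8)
The plan is to show that the sign of every cycle is positive, where the sign of a cycle is the product of the signs of its edges; balance is equivalent to every cycle having positive sign. First I would invoke Definition \ref{parsign} to recall that for a parity signed graph $S$, there is a bijection $f:V(G)\to\{1,2,\dots,n\}$ so that each edge $uv$ is positive exactly when $f(u)$ and $f(v)$ have the same parity, and negative exactly when they have opposite parity. The key idea is to reinterpret the sign of an edge in terms of parities of labels, and then to show that negative edges behave like a coboundary (a cut), which forces every cycle to meet them an even number of times.

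The main step is as follows. For a vertex $v$, let $p(v)\in\{0,1\}$ denote the parity of $f(v)$, i.e., $p(v)\equiv f(v)\pmod 2$. Then an edge $uv$ is negative precisely when $p(u)\neq p(v)$, and positive precisely when $p(u)=p(v)$. Writing signs multiplicatively with $+\mapsto +1$ and $-\mapsto -1$, I would record the identity
\begin{equation}
\sigma_f(uv) = (-1)^{p(u)+p(v)}.
\end{equation}
Now take any cycle $C = v_0 v_1 \cdots v_{k-1} v_0$ in $G$. The sign of $C$ is the product of the signs of its edges, which by the identity above telescopes:
\begin{equation}
\prod_{i=0}^{k-1} \sigma_f(v_i v_{i+1}) = \prod_{i=0}^{k-1} (-1)^{p(v_i)+p(v_{i+1})} = (-1)^{2\sum_{i=0}^{k-1} p(v_i)} = +1,
\end{equation}
with indices taken modulo $k$, since each $p(v_i)$ appears exactly twice in the exponent (once for the edge entering $v_i$ and once for the edge leaving it). Hence every cycle has positive sign, equivalently an even number of negative edges, so $S$ is balanced.

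The underlying structural reason, which I would state explicitly, is that the function $p:V(G)\to\{0,1\}$ partitions the vertices into the odd-labelled set $O$ and the even-labelled set $E$, and the negative edges are exactly the edges of the cut between $O$ and $E$; any cycle crosses such a cut an even number of times. I do not expect a serious obstacle here: the argument is a clean telescoping/coboundary computation once the parity interpretation is in place. The only point requiring a little care is the bookkeeping that each intermediate parity value cancels in pairs around the cycle, which is exactly what makes the exponent even; this is automatic from the cyclic structure and needs no case analysis.
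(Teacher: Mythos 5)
Your proof is correct, and it takes a genuinely more self-contained route than the paper. The paper's own proof of Theorem \ref{thm_balanced} is a one-line citation: it appeals to the result in \cite{Ach} that every parity signed cycle has an even number of negative edges, and concludes balance from Harary's cycle criterion. You instead prove that fact from scratch by introducing the parity potential $p(v)\equiv f(v)\pmod 2$, observing $\sigma_f(uv)=(-1)^{p(u)+p(v)}$, and telescoping around any cycle to get sign $+1$; equivalently, you exhibit $\sigma_f$ as the coboundary of $p$, so the negative edges are exactly the cut between the odd-labelled and even-labelled vertex sets, which every cycle crosses an even number of times. This is exactly the structure the paper itself exploits later: your odd/even bipartition with all negative edges across it is the partition of Theorem \ref{char} (and the Harary bipartition $H_0\cup H_1$ used in the cycle and path theorems), so your argument buys a uniform, citation-free proof that also foreshadows the paper's characterization, at the modest cost of a few lines of bookkeeping that the paper outsources to \cite{Ach}. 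The one point worth noting is that your telescoping computation in fact works for any closed walk, not just simple cycles, so no care about repeated vertices is needed; your cancellation bookkeeping is sound as written.
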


\begin{proof}
	It has been shown in \cite{Ach} that every parity signed cycle has an even number of negative edges. Hence, every parity signed graph is balanced.
\end{proof}

In this perspective, the next result is very important.

\begin{theorem}\label{char}
	A signed graph $S$ is a parity signed graph if and only if its vertex set V(S) can be partitioned  into two subsets $V_1(S)$ and $V_2(S)$ such that negative edges lie across $V_1(S)$ and $V_2(S)$ and $||V_1(S)|-|V_2(S)||\le 1$.
\end{theorem}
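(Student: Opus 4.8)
The plan is to prove both implications by exploiting the natural correspondence between a parity labelling $f$ and the partition of $V(G)$ into its odd-labelled and even-labelled vertices. The key arithmetic fact driving everything is that the label set $\{1,2,\dots,n\}$ contains exactly $\lceil n/2\rceil$ odd numbers and $\lfloor n/2\rfloor$ even numbers, whose cardinalities differ by at most $1$; the sign rule in Definition \ref{parsign} then translates ``opposite parity of labels'' into ``endpoints in opposite parts'' and ``same parity'' into ``endpoints in the same part.''

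For the forward direction, I would assume $S=(G,\sigma)$ is a parity signed graph, so that $\sigma=\sigma_f$ for some bijection $f:V(G)\to\{1,\dots,n\}$. Set $V_1:=\{v:f(v)\text{ odd}\}$ and $V_2:=\{v:f(v)\text{ even}\}$. By Definition \ref{parsign}, an edge $uv$ is negative exactly when $f(u)$ and $f(v)$ have opposite parity, i.e.\ exactly when $u$ and $v$ lie in different parts; hence the negative edges are precisely the edges across $V_1$ and $V_2$, while every positive edge lies within a part. Finally $|V_1|=\lceil n/2\rceil$ and $|V_2|=\lfloor n/2\rfloor$, so $\big||V_1|-|V_2|\big|\le 1$, giving the required partition.

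For the converse I would start from a partition $V_1,V_2$ whose cut is exactly $E^-(S)$ (equivalently, negative edges across and positive edges within) with $\big||V_1|-|V_2|\big|\le 1$, and build a bijection realizing $\sigma$. Assuming without loss of generality $|V_1|\ge|V_2|$, the size constraint forces $|V_1|=\lceil n/2\rceil$ and $|V_2|=\lfloor n/2\rfloor$, which match the counts of odd and even labels available in $\{1,\dots,n\}$. I would therefore assign the odd labels bijectively to $V_1$ and the even labels bijectively to $V_2$, obtaining a bijection $f$. Checking that $\sigma_f=\sigma$ is then immediate from the sign rule: an edge is $\sigma_f$-negative iff its endpoints receive opposite-parity labels iff they lie in different parts iff the edge is in the cut iff it is negative in $S$, and the positive case is symmetric.

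The main obstacle, and the only place where the hypothesis $\big||V_1|-|V_2|\big|\le 1$ is genuinely used, is the counting step in the converse: one must verify that the prescribed part sizes are exactly the numbers of odd and even integers in $\{1,\dots,n\}$, since otherwise no parity-respecting bijection exists. A second point requiring care is the reading of ``negative edges lie across $V_1$ and $V_2$'': for the converse to hold, the cut must consist of \emph{exactly} the negative edges, with positive edges staying inside the parts, since a positive edge across the cut would be forced to become negative under any such labelling. I expect both directions to be otherwise routine once the odd/even counting is pinned down.
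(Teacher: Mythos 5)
Your proposal is correct and follows essentially the same route as the paper: the odd/even label classes give the partition in the forward direction, and assigning odd labels to the larger part realizes the converse, which the paper dismisses with ``Sufficiency is easy to see.'' Your explicit counting argument for the converse and your observation that the cut must consist of \emph{exactly} the negative edges merely fill in details the paper leaves implicit.
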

Since we assume graphs are connected, the partition is uniquely determined.
\begin{proof}
	For necessity, assume that $S$ is a parity signed graph. Now, partition the vertex set $V(S)$ into two subsets $V_1(S)$ and $V_2(S)$ such that vertices of  $V_1(S)$ and $V_2(S)$ are labelled with even and odd labels, respectively. Clearly, negative edges lie across $V_1(S)$ and $V_2(S)$.
	
	Now, if $n=|V(S)|$, is a an even number, then $|V_1(S)|=|V_2(S)|$.
	If $n=|V(S)|$, is a an odd number, then $|V_1(S)|+1=|V_2(S)|$.
	Hence, $||V_1(S)|-|V_2(S)||\le 1$.
	
	Sufficiency is easy to see.
\end{proof}

We provide an elementary algorithm to determine whether a (connected) signed graph is a parity signed graph.

\indent \textbf{Step 1}. Decide whether $S$ is balanced.  If not, ``No''.\\
\indent \textbf{Step 2}. Contract each positive section to a vertex and label the new vertex with the order of that section.\\
\indent \textbf{Step 3}. Any unlabeled vertex gets the label 1.\\
\indent \textbf{Step 4}. The resulting graph is negative homogeneous and balanced, so it is bipartite.  Find the two vertex classes and sum up the vertex labels in each class.  If the sums differ by more than 1, then ``No''.  Otherwise, ``Yes''.\\

Given a balanced signed graph, we wish to determine whether it is parity signed.  We can answer this question in some cases.  We begin with cycles.

A section in a cycle is a path, unless it is the whole cycle.  It is called odd or even if its length is odd or even, respectively.  We write $l(P)$ for the length of a path $P$.

\begin{theorem} [Cycle Theorem]\label{thm:cycle}
	Let $C$ be a signed cycle.  Let $C$ have odd negative sections $N_1, N_2, \ldots, N_k$ ($k\geq0$), in cyclic order around $C$.  Let $m_i$ be the number of positive edges between $N_i$ and $N_{i+1}$ for $i=1,2,\ldots,k-1$ and let $m_k$ be the number of positive edges between $N_k$ and $N_1$.  Let $m_o$ be the sum of all $m_i$ for odd $i$ and $m_e$ the sum of all $m_i$ for even $i$.  Then $C$ is a parity signed graph if and only if $k$ is even and either
	\begin{enumerate}
		\item $k=0$ and either $C$ has even length and is all negative, or $C$ has odd length and has exactly one positive edge, or else
		\item $k>0$ and $|m_o-m_e| \leq 1$.
	\end{enumerate}
\end{theorem}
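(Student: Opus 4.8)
The plan is to reduce everything to the cut characterization of Theorem \ref{char}. A parity signed graph is balanced (Theorem \ref{thm_balanced}), and a cycle is balanced exactly when it has an even number of negative edges. Since the total number of negative edges has the same parity as the number $k$ of \emph{odd} negative sections (each even section contributes an even number of negative edges), the condition ``$k$ is even'' is precisely the requirement that $C$ be balanced. So I would first dispose of the case $k$ odd: then $C$ is unbalanced, hence not parity signed, and both sides of the asserted equivalence are false. For the rest I assume $k$ even, so $C$ is balanced; by the uniqueness of the partition noted after Theorem \ref{char}, its vertex set has a unique bipartition $V_1\cup V_2$ with all negative edges across it and all positive edges inside the parts. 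By Theorem \ref{char}, $C$ is parity signed iff $\big||V_1|-|V_2|\big|\le 1$, so the whole problem becomes the computation of $d:=|V_1|-|V_2|$ in terms of the section data.

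The heart of the argument is to read off $d$ by walking once around $C$. Fix an orientation and assign each vertex to the edge that follows it, giving a bijection between vertices and edges. Attach to each vertex a sign $+1$ or $-1$ according to its part, so that $d$ (up to the overall choice of which part is $V_1$) is the sum of these signs; this running sign is constant across a positive edge and reverses across a negative edge. Summing the signs of the vertices assigned to a single section then gives a clean local contribution: a positive section of length $p$ contributes $\pm p$ (its constant sign times its length), an even negative section contributes $0$ (the alternating signs cancel in pairs), and an odd negative section contributes $\pm 1$. Moreover the running sign is reversed exactly when one crosses an odd negative section, since even negative sections produce an even number of reversals and positive sections none.

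With this in hand the case $k>0$ falls out. Let $\epsilon_i$ denote the running sign just before entering $N_i$; then crossing each $N_i$ flips it, so $\epsilon_{i+1}=-\epsilon_i$ and hence $\epsilon_i=(-1)^{i-1}\epsilon_1$. The total contribution of the odd negative sections is $\sum_{i=1}^{k}\epsilon_i$, which vanishes because $k$ is even and the terms alternate in sign. Between $N_i$ and $N_{i+1}$ the running sign is constant (positive sections do not change it and the intervening even negative sections restore it), so all $m_i$ positive edges there contribute with the single sign $\epsilon_{i+1}=(-1)^{i}\epsilon_1$; summing gives $d=\epsilon_1\sum_{i=1}^{k}(-1)^{i}m_i=\epsilon_1\,(m_e-m_o)$, whence $|d|=|m_o-m_e|$ and the condition $|d|\le 1$ is exactly item (2). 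For $k=0$ every negative section is even, so the running sign is globally constant on the positive sections and $d=\pm m^{+}$, where $m^{+}$ is the total number of positive edges; thus $|d|\le 1$ forces $m^{+}\in\{0,1\}$. The subcase $m^{+}=0$ is the all-negative cycle (balanced iff its length is even, with $d=0$), and $m^{+}=1$ leaves a single negative section of length $n-1$, which is even exactly when $n$ is odd; these are precisely the two alternatives of item (1).

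I expect the main obstacle to be the careful bookkeeping in the second paragraph: pinning down which vertex is assigned to which section so that the contributions $\pm p$, $0$, $\pm 1$ are exact and the shared endpoints of adjacent sections are counted once, and verifying that the running sign behaves as claimed (reversing only across odd negative sections). Once the per-section contributions and the alternation $\epsilon_i=(-1)^{i-1}\epsilon_1$ are nailed down, the cancellation of the odd-section terms and the emergence of $m_o-m_e$ are immediate, and the $k=0$ boundary case is a short check.
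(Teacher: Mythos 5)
Your proposal is correct and takes essentially the same approach as the paper: both reduce the question to the near-equal Harary bipartition criterion of Theorem \ref{char} and compute $|H_1|-|H_0|=\pm(m_o-m_e)$ by assigning each vertex to one adjacent edge (your vertex-to-following-edge bijection is the same half-open bookkeeping as the paper's $N_i\setminus a_i$ and $Q_i\setminus a_i'$) with per-section contributions of $\pm1$ for odd negative sections, $0$ for even ones, and $\pm l$ for positive ones. Your running-sign $\epsilon_i=(-1)^{i-1}\epsilon_1$ is just a repackaging of the paper's alternating contributions to $|H_1|-|H_0|$, and your $k=0$ analysis ($d=\pm m^+$, forcing at most one positive edge) matches the paper's Case 1.
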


\begin{proof}
	For $C$ to be parity signed, it must be balanced, and then $k$ must be even.  Thus, we assume $C$ is balanced and $k$ is even.  Let $V(C) = H_0 \cup H_1$ be the \emph{Harary bipartition} of $C$, i.e., $H_0 \cap H_1 = \emptyset$ and an edge is positive if and only if its endpoints are both in the same set, $H_0$ or $H_1$.
	
	Consider a negative section of length $l$, $N = v_0v_1\cdots v_l$.  If $v_0 \in H_h$, then all $v_{2j}$ in $N$ belong to $H_h$ and all $v_{2j+1}$ in $N$ belong to $H_{1-h}$.  Thus, among the vertices $v_1,\ldots,v_l$, the same number belong to $H_h$ and $H_{1-h}$ if $l$ is even, and one more vertex belongs to $H_{1-h}$ than to $H_h$ if $l$ is odd.  We use these facts repeatedly in the proof.
	
	\textbf{Case 1.}  \emph{$k=0$.}  
	If $C$ is all negative, it has even length (because it is balanced) and it is parity signed.
	
	Assume $C$ is not all negative.  Then it has positive sections $P_1, P_2, \ldots, P_r$ and negative sections $M_1,\ldots,M_r$, so $C = M_1P_1M_2P_2 \cdots M_rP_r$.  Let $c_i$ be the initial vertex of $M_i$ and let $c_i'$ be its final vertex, and let $d_i$ be the initial vertex of $P_i$.  By choice of notation, assume $c_1 \in H_1$.  Each $M_i \setminus c_i$ contributes equally many vertices to $H_0$ and $H_1$, since $l(M_i)$ is even.  
	For the same reason, for each $M_i$, $c_i$ and $c_i'$ are in the same part of the Harary bipartition.  Also, for each $i$, all of $V(P_i)$ is contained in the same part of the Harary bipartition.  It follows that $V(P_i) \subseteq H_1$ for all $i$.
	
	Now let us count the vertices in $H_0$ and $H_1$, or more precisely, let us count $|H_1| - |H_0|$.  
	Each $M_i \setminus c_i$ contributes equally many vertices to $H_0$ and $H_1$.  
	Each $P_i \setminus d_i$ contributes $l(P_i)$ vertices to $H_1$.  
	Therefore, $|H_1| - |H_0| = l(P_1) + \cdots + l(P_r)$.  Since this number is non-negative and $C$ is parity signed if and only if $|H_1| - |H_0| = 0, \pm1$, we conclude that $C$ is parity signed if and only if it has at most one positive edge.
	
	Note that if $C$ has no positive edges, since it is balanced it has even length.  
	If $C$ has one positive edge, since it is balanced it has odd length.  
	In both cases it has one negative section, which is even, so it does fall under part (1) of the theorem.
	
	\textbf{Case 2.}  \emph{$k>0$.}  
	We focus attention on the odd negative sections $N_i$.  Let $a_i$ be the initial vertex and $a_i'$ the final vertex of $N_i$.  Let $Q_i$ be the path in $C$ from $a_i'$ to $a_{i+1}$ if $i<k$ and let $Q_k$ be the path from $a_k'$ to $a_i$.  By choice of notation, assume $b_1 \in H_1$.
	
	Because we assumed $a_1' \in H_1$, each $N_i\setminus a_i$ contributes one more vertex to $H_1$ than to $H_0$ if $i$ is odd and one more to $H_0$ than to $H_1$ if $i$ is even.  In total, all $N_i \setminus a_i$ contribute an equal number of vertices to $H_0$ and $H_1$.
	
	Consider a particular $Q_i$.  It consists of positive sections $P_1, \ldots, P_{r_i}$ and even negative sections $M_1,\ldots,M_{r_i-1}$, so that $Q_i = P_1M_1P_2 \cdots M_{r_i-1}P_{r_i}$.  (Possibly $r_i=1$; then $Q_i$ is the positive section $P_1$.)  The initial and final vertices of each even negative section $M_j$ belong to the same part of the Harary bipartition, so if $a_i'$, which is the final vertex of $N_i$ and the initial vertex of $Q_i$, belongs to $H_h$, then $V(P_j) \subseteq H_h$.  It follows that $P_j \setminus b_j$, where $b_j$ is the initial vertex of $P_j$, contributes $l(P_j)$ vertices to $H_h$.  Let $b_j'$ be the initial vertex of $M_j$; then $M_j \setminus b_j'$ contributes the same number of vertices to $H_h$ as to $H_{1-h}$.  Thus, the total contribution of $Q_i$ to $|H_h| - |H_{1-h}|$ is the total length of the positive sections in $Q_i$, which is the number of positive edges in $Q_i$.  This is $m_i$.
	
	Summarizing, $Q_1$ contributes $m_1$ to $|H_1| - |H_0|$, $Q_2$ contributes $m_2$ to $|H_0| - |H_1|$, $Q_3$ contributes $m_3$ to $|H_1| - |H_0|$, and so on.  The total contribution to $|H_1| - |H_0|$ is $m_o - m_e$.  $C$ is parity signed if and only if this number equals $0$, $1$, or $-1$.  That completes the proof.
\end{proof}

\begin{theorem}[Path Theorem]\label{thm:path}
	Let $P$ be a signed path with odd negative sections $N_1, N_2, \ldots, N_k$ ($k\geq0$), in order along $P$.  Let $Q_0$ be the path preceding $N_1$ (possibly of length $0$), $Q_1$ the path between $N_1$ and $N_2$, etc., and $Q_k$ the path following $N_k$ (possibly of length $0$).  Let $m_i$ be the number of positive edges in $Q_i$, and let $m_o$ be the sum of all $m_i$ for odd $i$ and $m_e$ the sum of all $m_i$ for even $i$.  Then $P$ is parity signed if and only if 
	
	$$
	m_e-m_o = \begin{cases}
	1, 0, -1,	&\text{if $k$ is odd}, \\
	0, -1, -2,	&\text{if $k$ is even}.
	\end{cases}
	$$
	
	In particular, when $P$ has no odd negative sections, it is parity signed if and only if it has no positive edges.  When $P$ has exactly one odd negative section, it is parity signed if and only if the numbers of positive edges on the two sides of the odd negative section differ by at most $1$.
\end{theorem}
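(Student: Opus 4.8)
The plan is to mirror the proof of the Cycle Theorem, now exploiting the fact that a path is a tree and hence automatically balanced, so that the Harary bipartition $V(P) = H_0 \cup H_1$ always exists and no preliminary balance test is needed. By Theorem~\ref{char}, $P$ is parity signed if and only if $\bigl||H_0| - |H_1|\bigr| \le 1$, so the entire task reduces to computing the signed difference $|H_1| - |H_0|$ in terms of the numbers $m_i$ and the parity of $k$. I will reuse the elementary counting facts about negative sections from the Cycle Theorem proof: in a negative section $N = v_0 v_1 \cdots v_\ell$ consecutive vertices lie in opposite parts, so among $v_1, \ldots, v_\ell$ the two parts receive equally many vertices when $\ell$ is even, while the part opposite to that of $v_0$ receives one extra vertex when $\ell$ is odd.

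First I would fix the decomposition $P = Q_0 N_1 Q_1 N_2 \cdots N_k Q_k$ and note, exactly as in the cycle argument, that each $Q_i$ is built from positive sections and \emph{even} negative sections only. Hence the initial and final vertices of each $Q_i$ lie in the same part, whereas each odd negative section $N_i$ interchanges the two parts. Writing $s_i$ for the initial vertex of $Q_i$ and choosing $s_0 \in H_0$, it follows that the parts of $s_0, s_1, s_2, \ldots$ alternate, so $s_i \in H_{i \bmod 2}$. The ``positive sections stay in one part, even negative sections split evenly'' bookkeeping used for the cycle then shows that the vertices of $Q_i$ other than $s_i$ contribute $+m_i$ to $|H_1| - |H_0|$ when $i$ is odd and $-m_i$ when $i$ is even; summed over all $i$, the net contribution of the $Q_i$ is $m_o - m_e$.

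Next I would handle the two ingredients with no cycle analogue, which is exactly where the path and cycle results diverge. Each odd negative section $N_i$ has its initial vertex in $H_{(i-1)\bmod 2}$ and therefore donates one extra vertex to the opposite part $H_{i\bmod 2}$; these alternating $\pm1$ contributions sum to $1$ when $k$ is odd and to $0$ when $k$ is even. Finally the single unmatched first vertex $s_0 \in H_0$ contributes $-1$. Collecting everything gives
\[
|H_1| - |H_0| = (m_o - m_e) - 1 + \epsilon,
\]
where $\epsilon = 1$ if $k$ is odd and $\epsilon = 0$ if $k$ is even. Imposing $\bigl||H_1| - |H_0|\bigr| \le 1$ then yields $m_e - m_o \in \{1,0,-1\}$ for odd $k$ and $m_e - m_o \in \{0,-1,-2\}$ for even $k$, and the two special cases $k=0$ and $k=1$ follow by substitution, using $m_i \ge 0$ to discard the values that cannot occur.

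The step I expect to be the main obstacle is precisely this asymmetric bookkeeping. Because the path does not close up, balance no longer forces $k$ to be even, and the open ends introduce both the constant offset $-1$ coming from $s_0$ and the parity term $\epsilon$ coming from the unpaired odd negative sections. Keeping the running part assignment and the systematically excluded initial vertices consistent, so that each vertex is counted exactly once, is the delicate part; once the difference $|H_1| - |H_0|$ is pinned down, the split into the $k$-odd and $k$-even cases is immediate.
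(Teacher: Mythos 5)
Your proposal is correct and follows essentially the same route as the paper's own proof: both take the Harary bipartition of the (automatically balanced) path with the initial vertex in $H_0$, decompose $P = Q_0 N_1 Q_1 \cdots N_k Q_k$, count each segment minus its initial vertex so every vertex is tallied once, and arrive at the identical formula $|H_1| - |H_0| = m_o - m_e - 1 + \delta$ with $\delta = 1$ for odd $k$ and $\delta = 0$ for even $k$. The concluding case split and the $k=0$ and $k=1$ specializations match the paper's as well, so there is nothing to repair.
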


\begin{proof}
	The proof is similar to that of Theorem \ref{thm:cycle}.  Any path is balanced, so $P$ has a Harary bipartition $V = H_0 \cup H_1$, where we take $H_0$ to contain the initial vertex of $P$.  Let $a_i$ denote the initial vertex of $N_i$ and $a_i'$ the initial vertex of $Q_i$.  As in Theorem \ref{thm:cycle}, $N_i \setminus a_i$ contributes $1$ to $|H_1| - |H_0|$ if $i$ is odd and $-1$ to it if $i$ is even.  Let $m_i$ be the number of positive edges in $Q_i$; then $Q_i \setminus a_i'$ contributes $m_i$ to $|H_1| - |H_0|$ if $i$ is odd and $-m_i$ to it if $i$ is even.  Therefore, $|H_1| - |H_0| = m_o-m_e-1+\delta$, where $\delta = 0$ if $k$ is even and $\delta = 1$ if $k$ is odd.   Since $P$ is parity signed if and only if $|H_1| - |H_0| = -1, 0, 1$, the theorem follows.
\end{proof}

\begin{corollary}\label{corpath}
	A signed path $P_n$, with exactly two sections of opposite parity is a parity signed graph if and only if $n=3$.
\end{corollary}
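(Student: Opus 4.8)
The plan is to read this off directly from the Path Theorem (Theorem~\ref{thm:path}), parsing the statement as: \emph{every} signed $P_n$ whose sign pattern forms exactly two sections is parity signed precisely when $n=3$. First I would record the structural reduction. Since consecutive sections of a path alternate in sign, ``exactly two sections of opposite parity'' means one positive section, of some length $p\ge1$, and one negative section, of some length $q\ge1$, with $p+q=n-1$. Moreover, with only two sections the negative section must sit at one end of the path, so all $p$ positive edges lie on one side of it and $0$ on the other. This places us squarely in the two regimes isolated by the ``in particular'' clauses of Theorem~\ref{thm:path}: the negative section is either even (so $k=0$, no odd negative section) or odd (so $k=1$, exactly one odd negative section).

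For the implication ``all such paths parity signed $\Rightarrow n=3$'' I would argue contrapositively. Note first that two sections force $n\ge3$, since $n\le2$ gives at most one edge. Assuming $n\ge4$, I exhibit one two-section sign pattern that fails. Because $n-1\ge3$ I may take the positive section of length $p=2$ and the negative section of length $q=n-3\ge1$. If $q$ is even then $k=0$ and the second clause of Theorem~\ref{thm:path} applies: the path would have to have no positive edge, but it has two, so it is not parity signed. If $q$ is odd then $k=1$ and the first clause applies: the positive edges on the two sides of the odd negative section number $2$ and $0$, differing by $2>1$, so again it is not parity signed. Either way, not every two-section $P_n$ is parity signed.

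For the reverse implication I would simply verify $n=3$. The unique two-section signed $P_3$ has one positive and one negative edge, i.e.\ $p=q=1$; the negative section is odd, so $k=1$, and the positive edges on its two sides number $1$ and $0$, differing by $1\le1$. Hence Theorem~\ref{thm:path} certifies it parity signed, and since this is the only two-section pattern on three vertices, every two-section $P_3$ is parity signed. Combining the two directions yields the stated equivalence.

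I do not expect a genuine obstacle, since Theorem~\ref{thm:path} has already done the analytic work; the two points demanding care are both interpretive. The first is the quantifier: the equivalence is between ``$n=3$'' and the \emph{universal} assertion that all two-section patterns are parity signed, not between ``$n=3$'' and parity-signedness of one fixed pattern, which would be false (for instance, a single positive edge followed by a negative section of length $3$ is a parity signed $P_5$, by the same first clause with counts $1$ and $0$). The second is the bookkeeping that for every $n\ge4$ some admissible split achieves $p\ge2$; this is immediate once $n-1\ge3$.
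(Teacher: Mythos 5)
Your proof is internally correct, and it runs on the same engine as the paper's: Corollary \ref{corpath} is stated in the paper without any proof, as an immediate specialization of Theorem \ref{thm:path}, and your reduction (a positive section of length $p\ge1$ and a negative section of length $q\ge1$ with $p+q=n-1$, the negative section necessarily at an end of the path) followed by the case split on the parity of $q$ is exactly how that specialization goes. Your reading of ``opposite parity'' as opposite sign is also the charitable one: with exactly two sections the signs are automatically opposite, whereas reading it as opposite \emph{length} parity (the sense in which the paper defines odd and even sections) would make the hypothesis unsatisfiable at $n=3$.

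But note what the computation from Theorem \ref{thm:path} actually yields: a two-section signed path is parity signed if and only if $p=1$ and $q$ is odd, i.e., if and only if $n$ is odd and the positive section is a single edge. So under the standard per-instance reading of the corollary (``let $P_n$ have exactly two sections; then $P_n$ is parity signed iff $n=3$''), the statement is false, and the example you mention only in passing --- the sign sequence $+,-,-,-$ on $P_5$, which is parity signed via the labelling $1,3,2,5,4$ along the path --- is a genuine counterexample to the paper's statement, not merely a caution about quantifier placement. Your universal reinterpretation (``every two-section pattern on $P_n$ is parity signed iff $n=3$'') is the only reading under which the corollary is true, and your two directions for it are sound: for $n\ge4$ the pattern $p=2$ fails in both parities of $q$, and the unique two-section pattern on $P_3$ ($p=q=1$) works. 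You should, however, present this explicitly as a correction rather than an interpretation. The paper's other characterizations (e.g.\ Theorem \ref{thmstar}) are per-instance statements, so the intended reading here is presumably the false one, and the honest fix is either your quantifier repair or the sharper per-instance statement that a two-section signed path is parity signed if and only if its positive section has length $1$ and its negative section has odd length.
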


\begin{theorem}\label{clique}
	If a parity signed graph is negative homogeneous then it is bipartite.
\end{theorem}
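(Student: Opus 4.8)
The plan is to invoke the structural characterization of Theorem \ref{char} and observe that the all-negative hypothesis forces every edge of the graph to cross the vertex partition. First I would recall that a negative homogeneous signed graph is one with $E^+(S)=\emptyset$, so that \emph{every} edge of $S$ is negative. Since $S$ is assumed to be a parity signed graph, Theorem \ref{char} supplies a partition of $V(S)$ into two parts $V_1(S)$ and $V_2(S)$ with the property that every negative edge lies across the two parts.

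Next I would simply combine these two facts. Because $S$ is negative homogeneous, every edge is negative; because every negative edge runs between $V_1(S)$ and $V_2(S)$, it follows that every edge of $S$ has one endpoint in $V_1(S)$ and the other in $V_2(S)$. Thus $(V_1(S),V_2(S))$ is a bipartition of the underlying graph $G$, and $G$ is bipartite, as required.

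As an alternative route, I could argue directly from the definition of a parity labelling. A parity signed graph admits a bijection $f:V(G)\to\{1,\dots,n\}$ with $\sigma=\sigma_f$, and under this labelling an edge is negative exactly when its two endpoints receive labels of opposite parity. Letting $O$ and $E$ denote the sets of odd- and even-labelled vertices, the hypothesis that all edges are negative says precisely that every edge joins a vertex of $O$ to a vertex of $E$, which again exhibits $(O,E)$ as a bipartition of $G$.

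I do not expect a genuine obstacle here, since the statement is essentially immediate once the right characterization is in hand: the homogeneity hypothesis upgrades the weak conclusion ``negative edges cross the partition'' into the strong conclusion ``all edges cross the partition,'' and the latter is exactly the definition of bipartiteness. The only point meriting a word of care is that in the general setting of Theorem \ref{char} the partition may carry positive edges within a part; it is precisely the absence of positive edges under negative homogeneity that rules this out and yields a clean bipartition.
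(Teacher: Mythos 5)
Your proposal is correct, but it takes a genuinely different route from the paper. The paper's proof is a one-line deduction from Theorem \ref{thm_balanced}: a parity signed graph is balanced, and in a negative homogeneous signed graph a cycle is positive (i.e., has an even number of negative edges) if and only if its length is even, so every cycle is even and the graph is bipartite. You instead argue from the vertex partition: either via Theorem \ref{char}, where negative homogeneity upgrades ``negative edges cross the partition'' to ``all edges cross the partition,'' or, even more directly, from Definition \ref{parsign}, where the odd- and even-labelled classes $(O,E)$ form a bipartition because a negative edge is precisely one joining labels of opposite parity. Both arguments are sound; the paper's is shorter given that balance has already been established and keeps the cycle-parity viewpoint in the foreground, while yours is more elementary (it needs no balance machinery) and actually yields strictly more information: your bipartition is the parity bipartition itself, satisfying $\bigl||O|-|E|\bigr|\le 1$, which is exactly the refinement the paper proves separately in Theorem \ref{th_bip}. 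One small caution if you route through Theorem \ref{char}: its statement says negative edges lie across the parts, which is the only direction you need, but your second, definition-based argument is cleaner since it avoids any question of whether positive edges might also cross.
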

\begin{proof}
	This follows from Theorem \ref{thm_balanced}, since a cycle in a negative homogeneous signed graph is positive if and only if it is even.
\end{proof}

\begin{theorem}\label{th_bip}
	A (connected) negative homogeneous signed graph $S$ is a parity signed graph if and only if it is a spanning subgraph of $K_{m,n}$ (all negative) with $|m-n| \leq 1$.
\end{theorem}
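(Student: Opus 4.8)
The plan is to read this off almost directly from the characterization in Theorem \ref{char}, since for an all-negative signed graph the phrase ``negative edges lie across $V_1$ and $V_2$'' collapses to ``every edge lies across $V_1$ and $V_2$,'' which is precisely bipartiteness with those two parts. So the whole proof should amount to specializing Theorem \ref{char} to the negative homogeneous case and reinterpreting the resulting bipartition as an embedding into a complete bipartite graph.

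For necessity, I would start by assuming $S$ is negative homogeneous and parity signed, and invoke Theorem \ref{char} to obtain a partition $V(S) = V_1 \cup V_2$ across which all negative edges run and with $||V_1| - |V_2|| \le 1$. Because $S$ is all negative, every edge is negative, so every edge has its two ends in different parts; thus $V_1$ and $V_2$ are independent and $S$ is bipartite with these two parts. Setting $m = |V_1|$ and $n = |V_2|$, the graph $S$ embeds in the all-negative $K_{m,n}$, and since $V_1 \cup V_2 = V(S)$ this embedding is spanning, while $|m - n| \le 1$ by the cardinality bound. (Alternatively, Theorem \ref{clique} already supplies the bipartiteness; the extra content here is only the cardinality balance, which is exactly the $||V_1|-|V_2||\le 1$ clause of Theorem \ref{char}.)

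For sufficiency, I would reverse the steps: given that $S$ is a spanning subgraph of the all-negative $K_{m,n}$ with $|m - n| \le 1$, take $V_1, V_2$ to be the two colour classes, of sizes $m$ and $n$. Spanning guarantees that $V_1 \cup V_2 = V(S)$, so this is a genuine partition of the vertex set; bipartiteness of $K_{m,n}$ forces every edge of $S$ to cross it; and since $S$ is all negative, all negative edges lie across $V_1, V_2$. With $||V_1| - |V_2|| = |m - n| \le 1$, Theorem \ref{char} then certifies that $S$ is parity signed.

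I do not anticipate a substantive obstacle, as the statement is essentially a restatement of Theorem \ref{char} restricted to the homogeneous case. The only point requiring care is the use of the word \emph{spanning}: it is what lets me identify the bipartition of $K_{m,n}$ with a partition of all of $V(S)$, so that the size condition $|m-n|\le 1$ corresponds to the balance condition $||V_1|-|V_2||\le 1$ and the label set $\{1,\dots,|V(S)|\}$ is exhausted.
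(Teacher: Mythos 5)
Your proof is correct, and it takes a genuinely more economical route than the paper's. The paper proves Theorem \ref{th_bip} from scratch: for sufficiency it explicitly constructs the parity labelling (assigning the odd integers to one colour class and the even integers to the other, in both the balanced and the off-by-one case), and for necessity it argues by contrapositive that if $|V_1| > |V_2|+1$ then the labels cannot be split by parity along the bipartition, forcing a positive edge --- implicitly leaning on Theorem \ref{clique} for bipartiteness, just as you note in your parenthetical aside. You instead treat the statement as a corollary of Theorem \ref{char}, observing that when every edge is negative, the clause ``negative edges lie across $V_1$ and $V_2$'' collapses to bipartiteness with those parts, after which the size condition $||V_1|-|V_2||\le 1$ translates directly into $|m-n|\le 1$. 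What your approach buys is brevity and unification --- both directions follow from one citation, and you correctly flag that ``spanning'' is what identifies the colour classes of $K_{m,n}$ with a partition of all of $V(S)$. What the paper's approach buys is self-containedness and an explicit witness: it exhibits the actual bijection $f$, which is in the spirit of the labelling constructions used elsewhere in the paper (e.g., Theorems \ref{thmstar} and \ref{thm:bistar}). One small caution if you rely on Theorem \ref{char} verbatim: its statement literally says only that negative edges lie across the parts, whereas its sufficiency direction tacitly needs positive edges to lie within the parts; in the negative homogeneous setting this subtlety vanishes because there are no positive edges, which is precisely why your specialization goes through cleanly.
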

\begin{proof}
	Let $V_1(S), V_2(S)$ be the complementary independent subsets of the vertex set of $S$. If $|V_1(S)|=|V_2(S)|$, then to each vertex of one of the partition sets, we assign the odd integers and to each vertex of the other partition set we assign even integers. If $|V_1(S)|=|V_2(S)|+1$, then to the vertices in $V_1(S)$ we assign odd integers and to $V_2(S)$ we assign even integers. In both cases we see that the negative homogeneous bipartite signed graph has a parity signed labelling.
	
	Conversely, assume that $|V_1(S)|>|V_2(S)|+1$; then we cannot label the vertices of each of the partition sets exclusively with either odd or even integers. This forces us to have at least one positive edge in the bipartite signed graph. Hence the theorem follows.
\end{proof}

Even cycles give an infinite family of negative homogeneous parity signed graphs. It is to be noted that this is not the only family of negative homogeneous parity signed graphs; all such signed graphs have been found in Theorem \ref{th_bip}.  We mention some other simple families.
Distributing the odd and even integers is the crucial aspect in constructing a family of negative homogeneous parity signed graphs. Some families of negative homogeneous parity signed graphs are given below.
\begin{itemize}
	\item The ladder graph, $P_n \times K_2$.
	\item The corona of a negative homogeneous parity signed graph $S$ with $\overline{K}_n$, i.e., $S\odot \overline{K}_n$, is negative homogeneous.
\end{itemize}

\begin{theorem}\label{thmstar}
	Let $S=K_{1,m+n}$ be a signed star having $m$ ($n$) positive (negative) edges. Then $K_{1,m+n}$ has a parity labelling if and only if $S$ satisfies any one of the following: {\rm(i)} $n=m$, {\rm(ii)} $n=m+2$, {\rm(iii)} $n=m+1$.
\end{theorem}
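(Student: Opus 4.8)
The plan is to apply the characterization in Theorem \ref{char} directly, since a star has an especially rigid structure that leaves no freedom in the bipartition. First I would observe that in $S=K_{1,m+n}$ every edge is incident with the central vertex $v_0$, so the partition required by Theorem \ref{char} is completely forced. Placing $v_0$ in one part, say $V_1$, every positive edge (which by the characterization must lie within a single part) forces its leaf endpoint into $V_1$, and every negative edge (which must cross the partition) forces its leaf endpoint into $V_2$. Hence $V_1$ consists of $v_0$ together with the $m$ leaves joined to it by positive edges, while $V_2$ consists of the $n$ leaves joined to it by negative edges.

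Next I would simply count: $|V_1|=m+1$ and $|V_2|=n$. Since the underlying graph is connected, this is the unique partition satisfying the hypotheses of Theorem \ref{char} (as already remarked after that theorem), so $S$ is a parity signed graph if and only if $||V_1|-|V_2||\le 1$, that is, if and only if $|(m+1)-n|\le 1$.

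Finally I would unwind this inequality. Writing $|(m+1)-n|\le 1$ as $-1\le (m+1)-n\le 1$ gives $m\le n\le m+2$, i.e. $n\in\{m,\,m+1,\,m+2\}$, which are precisely the three cases (i), (iii), and (ii) in the statement. The argument is entirely mechanical once Theorem \ref{char} is in hand; the only point requiring care --- and the closest thing to an obstacle --- is justifying that the bipartition is \emph{forced} rather than merely \emph{available}. This follows from the fact that the center meets every edge, so once $v_0$'s part is fixed there is no choice in assigning the leaves, and one need not search over partitions as one would for a general graph.
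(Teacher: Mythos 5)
Your proof is correct, and it takes a genuinely different route from the paper. The paper proves necessity by a direct counting argument on labels: it assigns the centre the label $m+n+1$ or $m+n$, splits into cases according to the parity of $m+n$, counts the remaining odd and even integers available for the leaves, and deduces $n=m$, $n=m+2$, or $n=m+1$ case by case, with sufficiency declared obvious. You instead invoke Theorem \ref{char}: since every edge meets the centre, the bipartition is forced (centre plus the $m$ positive leaves on one side, the $n$ negative leaves on the other), and the condition $|(m+1)-n|\le 1$ yields $n\in\{m,m+1,m+2\}$, giving both directions of the equivalence in one stroke --- you rightly flag the forcedness of the partition as the one point needing care, which also makes sufficiency explicit rather than asserted. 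Your argument is shorter and more uniform, and the same forced-partition computation would dispatch the bistar result (Theorem \ref{thm:bistar}) just as quickly; the paper's hands-on counting, by contrast, keeps the theorem self-contained and records along the way which parity the centre's label must have, information that feeds directly into the refined restatement immediately following (where the three conditions are sorted by the parity of $m+n$). One small caveat: you read Theorem \ref{char} as requiring positive edges to lie \emph{within} a part, whereas its literal wording only says negative edges lie across. Your stronger reading is the intended one (it is what the paper's own proof of Theorem \ref{char} establishes, via the partition into even- and odd-labelled vertices, and the literal wording would not even support sufficiency), so your proof stands, but it is worth noting that you are relying on the corrected form of that characterization.
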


\begin{proof}
	We begin by proving necessity.
	Assume that $S$ is a parity signed graph. We show that one of the conditions (i), (ii), or (iii) holds. In $S$, we have $|E(S)|=m+n$ and $|V(S)|=m+n+1$. To label the vertices of $S$ we have $ m+n+1$ integers. Now we have two cases.
	
	\textbf{Case 1}: $m+n+1$ is odd.
	
	It is clear that $m+n$ is even. Let $u$ be a vertex in $S$ such that $d(u)=m+n$. Label $u$ with the integer $m+n+1$, which is an odd integer. Now we are left with $m+n$ integers which have to be assigned to $m+n$ vertices where $m$ positive ($n$ negative) edges are incident to $u$. Per the definition of a parity signed graph, $m$ ($n$) vertices must be labelled with odd (even) integers. As $m+n$ is even, the numbers of odd and even integers are equal. Thus we conclude that $m=n$.  Hence, (i) holds.
	
	Now we label $u$ with the integer $m+n$, which is an even integer. We are left with $m+n$ integers, having $\frac{m+n+2}{2}$ odd integers and $\frac{m+n-2}{2}$ even integers.
	
	Since $u$ has been labelled with an even integer, $m$ vertices must be labelled with even integers and $n$ vertices must be labelled with odd integers. In other words, $\frac{m+n+2}{2}=n$, i.e., $n=m+2$. Similarly, $\frac{m+n-2}{2}=m$, i.e., again $n=m+2$. Thus, (ii) holds.
	
	\textbf{Case 2}: $m+n+1$ is even
	
	It is clear that $m+n$ is odd. Now, we label $u$ with the integer $m+n+1$, which is even. Now, we are left with $m+n$ integers, which is an odd number. Per the definition of a parity signed graph, we have to assign even integers to $m$ vertices and odd integers to $n$ vertices which are adjacent to $u$. Note that we have $m+n$ integers, and $m+n$ is odd. Thus we conclude that $m=\lfloor \frac{m+n}{2} \rfloor$ and $n=\lceil \frac{m+n}{2} \rceil$. In other words, the number of odd integers is one more than the number of even integers. That is, $n=m+1$. Thus, (iii) holds.
	
	Now, we assign the $(m+n)^{\rm th}$ integer to $u$; it is an odd number. Again we are left with $m+n$ integers which have to be assigned to $m+n$ vertices. As $u$ has been assigned an odd integer, $m$ vertices have to be labelled with odd integers and $n$ vertices have to be labelled with even integers. As discussed above, we conclude that $n=m+1$. Hence, (iii) holds. 
	
	Thus necessity is proved.
	
	Sufficiency is obvious.
\end{proof}

Theorem \ref{thmstar} can also be stated as follows.
\begin{theorem}
	Let $K_{(1,~m+n)}$ be a signed star having $m$ positive and $n$ negative edges. Then $K_{(1,~m+n)}$ is a parity signed graph if and only if {\rm(i)} $n=m$ or $n=m+2$ when $m+n$ is even, or {\rm(ii)} $n=m$ or $n=m+1$ when $m+n$ is odd.
\end{theorem}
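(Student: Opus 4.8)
The plan is to derive this statement directly from Theorem~\ref{thmstar}, since the two are logically equivalent; the only work is to sort the three conditions of Theorem~\ref{thmstar} according to the parity of $m+n$. First I would record the parity that each of the three conditions forces: if $n=m$ then $m+n=2m$ is even; if $n=m+2$ then $m+n=2m+2$ is even; and if $n=m+1$ then $m+n=2m+1$ is odd. Thus each condition is compatible with exactly one parity of $m+n$, and the three cases of Theorem~\ref{thmstar} partition cleanly according to whether $m+n$ is even or odd.

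Next I would split into the two parity cases and read off the corresponding clauses. When $m+n$ is even, the condition $n=m+1$ cannot occur, so by Theorem~\ref{thmstar} the star $K_{(1,\,m+n)}$ is parity signed precisely when $n=m$ or $n=m+2$; this is clause~(i). When $m+n$ is odd, neither $n=m$ nor $n=m+2$ can occur, so by Theorem~\ref{thmstar} the star is parity signed precisely when $n=m+1$; this is clause~(ii). In both directions the equivalence is immediate once the parity bookkeeping is in place, so no genuinely new argument is required beyond Theorem~\ref{thmstar}.

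The one point needing care---really the only potential obstacle, and a cosmetic one---is the disjunct $n=m$ appearing alongside $n=m+1$ in clause~(ii) for the odd case. Since $n=m$ forces $m+n=2m$ to be even, this disjunct is never satisfied when $m+n$ is odd, so it is vacuous and may be discarded without altering the truth value of the statement. Noting this, the restated theorem is seen to be a faithful reorganization of Theorem~\ref{thmstar}, and it therefore holds by that theorem.
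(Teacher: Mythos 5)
Your proposal is correct and takes essentially the same route as the paper, which gives no separate argument for this statement but simply presents it as a restatement of Theorem~\ref{thmstar}; your parity bookkeeping (each of the three conditions forces a unique parity of $m+n$) makes that equivalence explicit. Your observation that the disjunct $n=m$ in clause~(ii) is vacuous when $m+n$ is odd is accurate and correctly disposes of the only point where the restatement could be doubted.
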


Do there exist signed bistars which have parity labellings? We answer this question affirmatively and give the structure of signed bistars with parity signed labellings.

\begin{theorem}\label{thm:bistar}
	Let $S:=B^+(m, n)$ be a bistar obtained from a positive edge $uv$ by adding $m$ positive edges and $n$ negative edges to the vertices $u$ and $v$, respectively.  Then $B^+(m, n)$ is a parity signed graph if and only if $n = m+1$ or $m + 3$ when $m+n$ is odd, or $n=m+2$ when $m+n$ is even.
\end{theorem}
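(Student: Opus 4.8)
The plan is to invoke the characterization of Theorem~\ref{char}. Since $B^+(m,n)$ is a tree it contains no cycles and is therefore trivially balanced; being connected, its Harary bipartition $V(S)=V_1\cup V_2$ (positive edges within a part, negative edges across) is uniquely determined up to interchanging the two parts. Hence by Theorem~\ref{char}, $S$ is parity signed if and only if $\bigl||V_1|-|V_2|\bigr|\le 1$, and it suffices to compute the two part sizes.

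First I would locate the bipartition explicitly by propagating signs outward from the central edge. Placing $u$ in $V_1$, the positive edge $uv$ forces $v\in V_1$, the $m$ positive pendant edges at $u$ force their leaves into $V_1$, and the $n$ negative pendant edges at $v$ force their leaves into $V_2$. Thus $V_1=\{u,v\}\cup\{\text{the }m\text{ leaves at }u\}$ and $V_2$ consists of the $n$ leaves at $v$, giving $|V_1|=m+2$ and $|V_2|=n$.

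It then remains to apply the inequality. We have $|V_1|-|V_2|=(m+2)-n$, so the condition $\bigl||V_1|-|V_2|\bigr|\le 1$ holds exactly when $-1\le m+2-n\le 1$, i.e.\ when $n\in\{m+1,m+2,m+3\}$. The last step is to rewrite this in terms of the parity of $m+n$, as in the statement: the difference $(m+2)-n$ has the same parity as $m+n$, so when $m+n$ is even the only admissible value in $\{-1,0,1\}$ is $0$, forcing $n=m+2$, while when $m+n$ is odd the admissible values are $\pm1$, forcing $n=m+1$ or $n=m+3$. This matches the three stated cases, with sufficiency following from the explicit labelling that assigns even integers to $V_1$ and odd integers to $V_2$ (or vice versa).

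I do not anticipate a serious obstacle; the argument is a direct application of Theorem~\ref{char} once the bipartition is pinned down. The only point requiring care is the parity bookkeeping in the final step---checking that the congruence of $(m+2)-n$ modulo $2$ correctly pairs the values $m+1,m+3$ with odd $m+n$ and the value $m+2$ with even $m+n$---but this is routine.
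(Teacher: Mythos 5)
Your proof is correct, but it takes a genuinely different route from the paper's. The paper proves necessity by hands-on label counting: it fixes the labels of $u$ and $v$, splits into cases according to the parity of $m+n$ and subcases according to whether $u,v$ receive odd or even labels, and in each subcase compares the number of remaining odd (resp.\ even) integers with the number of pendant vertices that must receive them; sufficiency is left as ``easy to see.'' You instead reduce the whole theorem to Theorem~\ref{char}: since $B^+(m,n)$ is a tree it is automatically balanced, its sign bipartition (positive edges within parts, negative edges across) is unique by connectedness, and propagating from $uv$ pins it down as $V_1=\{u,v\}\cup\{m\text{ leaves at }u\}$, $V_2=\{n\text{ leaves at }v\}$, so the entire theorem collapses to $|(m+2)-n|\le 1$ plus the (correct) parity bookkeeping $(m+2)-n\equiv m+n \pmod 2$ that sorts $n=m+2$ to even $m+n$ and $n=m+1,m+3$ to odd $m+n$. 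Your approach buys a shorter, case-free argument and, unlike the paper, makes sufficiency explicit via the labelling that assigns one parity class of $\{1,\dots,m+n+2\}$ to each part (one should just note that when $n=m+1$ or $n=m+3$ the total order is odd, so the odd integers, being one more numerous, must go to the larger part---your ``or vice versa'' covers this); the paper's computation, by contrast, is self-contained and does not rely on reading Theorem~\ref{char} in its intended strong form, namely that the partition there is exactly the edge-sign bipartition (positive edges within parts as well as negative edges across), which your Harary-bipartition instantiation correctly supplies.
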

\begin{proof}
	For the proof of necessity suppose that $S$ is a parity signed graph. Now, $|E(S)|=m+n+1$ and $|V(S)|=m+n+2$. We thus have $m+n+2$ integers to be assigned to the vertices of $S$. To show that the result holds, we consider two cases.
	
	\textbf{Case 1}. $m+n$ is even.
	
	Clearly, $m+n+2$ is even. Since $uv$ is a positive edge, per the definition of parity signed graph $u$ and $v$ must receive integers of the same parity.
	
	\textit{Subcase 1}. Suppose $u$ and $v$ receive odd integers. Since $m+n+2$ is even, the number of odd integers is equal to the number of even integers, which is $\frac{m+n+2}{2}$. As two odd integers have already been assigned to $u$ and $v$, we are left with $\frac{m+n+2}{2}-2$ odd integers and they must be assigned to $m$ vertices. In other words, we conclude that $\frac{m+n+2}{2}-2=m$. That is, $n=m+2$. Hence, the result holds. 
	
	On the other hand, we have $\frac{m+n+2}{2}$ even integers and they must be assigned to $n$ vertices. Hence, $\frac{m+n+2}{2}=n$. Thus we get $n=m+2$ and again the result holds.
	
	\textit{Subcase 2}. Suppose $u$ and $v$ receive even integers. As discussed above, we have $\frac{m+n+2}{2}-2$ even integers and they must be assigned to $m$ vertices. Thus $\frac{m+n+2}{2}-2=m$, so $n=m+2$ and the result holds.
	
	Further, $\frac{m+n+2}{2}$ odd integers are to be assigned to $n$ vertices. As discussed above, we conclude that $n=m+2$ and the result holds.
	
	\textbf{Case 2}. $m+n$ is odd.
	
	Then $m+n+2$ is odd. Now we have two subcases.
	
	\textit{Subcase 1}. Suppose $u$ and $v$ receive odd integers. From $m+n+2$ (which is an odd integer) we are left $m+n$ integers to be assigned to $m+n$ vertices. Among the $m+n+2$ integers we have $\frac{m+n+3}{2}$ odd integers and $\frac{m+n+1}{2}$ even integers. Among the $\frac{m+n+3}{2}$ odd integers, we have $\frac{m+n-1}{2}$ odd integers to be assigned to $m$ vertices. Thus, $\frac{m+n-1}{2}=m$. That is, $n=m+1$ and the result holds.
	
	On the other hand, we have $\frac{m+n+1}{2}$ even integers to be assigned to $n$ vertices. Hence, $\frac{m+n+1}{2}=n$, and $n=m+1$. Again the result holds.
	
	\textit{Subcase 2}.  Let us assume that $u$ and $v$ have been assigned even integers. Observe that from $m+n+2$ integers, which is an odd number, we are left with $m+n$ integers to be assigned to $m+n$ vertices. Among $m+n+2$ integers, we have $\frac{m+n+3}{2}$ odd integers and $\frac{m+n+1}{2}$ even integers. Among the $\frac{m+n+1}{2}$ even integers, we have $\frac{m+n-3}{2}$ even integers to be assigned to $m$ vertices. Then $\frac{m+n-3}{2}=m$. Thus $n=m+3$ and the result holds.
	
	On the other hand, $\frac{m+n+3}{2}$ odd integers have to be assigned to $n$ vertices. Thus, $\frac{m+n+3}{2}=n$ and we get $n=m+3$. Thus the result holds.
	
	Sufficiency is easy to see.
\end{proof}

Is there another way to view stars or bistars with a parity signed labelling? The answer is yes, as shown in the following results:
\begin{corollary}
	Let $K_{1,n}$ and $K_{1,m}$ be parity signed stars having $u$ and $v$ as their central vertices. A signed bistar obtained from $K_{1,n}$ and $K_{1,m}$ by joining $u$ and $v$ by a positive (negative) edge is a parity signed bistar if and only if the labels of $u$ and $v$ are of the same parity (different parity).
\end{corollary}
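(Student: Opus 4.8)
The plan is to argue entirely through the partition reformulation of Theorem~\ref{char}, together with the defining correspondence of Definition~\ref{parsign} between edge signs and the parities of endpoint labels. Since a bistar is connected, Theorem~\ref{char} gives a \emph{unique} partition $V_1\cup V_2$ of its vertex set in which every negative edge crosses between $V_1$ and $V_2$ and every positive edge stays inside one part, and the bistar is parity signed exactly when $\bigl||V_1|-|V_2|\bigr|\le 1$. In any parity labelling the two parts are precisely the odd-labelled and the even-labelled vertices, so two vertices lie in the same part if and only if their labels have the same parity. First I would record this translation and apply it to the bridging edge $uv$: a positive $uv$ forces $u$ and $v$ into the same part (equal parity), while a negative $uv$ forces them into opposite parts (opposite parity). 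This already gives the easy direction, namely that in a parity-signed bistar the sign of $uv$ and the parities of $u,v$ must be compatible in the stated way.

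For the converse I would decompose the bistar's partition along the two stars. Because $K_{1,n}$ and $K_{1,m}$ are assumed parity signed, Theorem~\ref{char} applied to each star splits its vertices into nearly balanced classes $X_1\cup Y_1$ (with $u\in X_1$) and $X_2\cup Y_2$ (with $v\in X_2$), the positive-edge leaves sharing the center's class and the negative-edge leaves lying opposite. I would then build the partition of the whole bistar by aligning these local partitions across $uv$: when $uv$ is positive, glue $X_1\cup X_2$ against $Y_1\cup Y_2$ so that both centers lie together; when $uv$ is negative, glue $X_1\cup Y_2$ against $Y_1\cup X_2$ so that $u$ and $v$ lie on opposite sides. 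In each case the result is exactly the Harary partition of the bistar, and I would read off the two side-sizes from the numbers of positive and negative leaves at $u$ and at $v$ (the same bookkeeping that underlies Theorem~\ref{thm:bistar}).

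The substance of the proof—and the step I expect to be the main obstacle—is verifying the balance $\bigl||V_1|-|V_2|\bigr|\le 1$ for the glued partition and reconciling it with the parity condition on $u$ and $v$. Here I would invoke Theorem~\ref{thmstar}, which pins the leaf-count difference at each center to a narrow range, and I would exploit the freedom furnished by the two observations stated earlier (reversing a labelling, and freely permuting labels within the odd or within the even class): when a star's two classes are equinumerous, swapping the roles of $X_i$ and $Y_i$ flips the parity of that center without destroying its parity labelling, which is what lets the center parity be matched to the sign of $uv$. The positive-edge and negative-edge cases are mirror images under a single such swap, so it suffices to carry out one case carefully and deduce the other by interchanging the parity of one center. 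Assembling these pieces shows that a compatible balanced partition of the bistar exists precisely when the parities of $u$ and $v$ agree with the sign of $uv$, which is exactly the assertion of the corollary.
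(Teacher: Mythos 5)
Your easy direction is fine, but the converse collapses at precisely the step you flagged as the main obstacle, and no amount of bookkeeping will rescue it, because under your reading of the statement --- each star carrying its own parity labelling of $\{1,\dots,n+1\}$ resp.\ $\{1,\dots,m+1\}$ --- the ``if'' direction is simply false. The imbalance of your glued partition is the sum of the two stars' class imbalances $d_i=(1+p_i)-q_i$, where $p_i,q_i$ count positive and negative leaves at the $i$-th centre; by Theorem~\ref{thmstar} each $d_i\in\{+1,0,-1\}$ (the cases $q_i=p_i$, $q_i=p_i+1$, $q_i=p_i+2$), so the sum can be $\pm2$. The swap you invoke to fix this is available only when a star's two Harary classes are equinumerous, i.e.\ only in the even-order case $q_i=p_i+1$, where $d_i=0$ and the swap is not needed; in the cases $q_i=p_i$ and $q_i=p_i+2$ the centre's parity is \emph{forced} in every parity labelling of that star (the larger class must be the odd class). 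Concretely, take two copies of $K_{1,2}$ each with one positive and one negative edge: each is a parity signed star whose centre must receive an odd label, so the centres have equal parity; join them by a positive edge. The Harary partition of the resulting bistar has sides of sizes $4$ and $2$, so by Theorem~\ref{char} it is not parity signed, even though the corollary's parity condition holds. Hence your concluding ``assembling these pieces'' step cannot be carried out: matching centre parities is necessary but not sufficient for your reading of the statement (you would additionally need $d_1$ and $d_2$ not to be nonzero with the same sign).

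The underlying problem is the interpretation, and it is worth noting that the paper offers no proof of this corollary at all. Under your separate-labellings reading even the \emph{only if} direction fails: join the mixed $K_{1,2}$ above (centre forced odd) to the all-negative $K_{1,2}$ (centre forced even, since $q=p+2$ with $p=0$) by a positive edge; the Harary sides then have sizes $3$ and $3$, so the bistar \emph{is} parity signed although the centres' labels in their own stars have opposite parity. The corollary is true only when ``the labels of $u$ and $v$'' refer to a single bijection $f$ from the bistar's vertex set onto $\{1,\dots,m+n+2\}$ whose parities induce the signs inside both stars; under that reading both directions are immediate from Definition~\ref{parsign} (the edge $uv$ is positive exactly when $f(u)$ and $f(v)$ have equal parity), and neither Theorem~\ref{thmstar} nor your decompose-and-glue machinery is needed. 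So the proposal is answering a genuinely different and stronger question --- when two independently labelled parity signed stars can be merged --- and for that question the answer it asserts is wrong in both directions.
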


\begin{corollary}
	Let $B^{*}(m,n)$ be a negative homogeneous bistar having $m$ and $n$ edges incident to vertices $u$ and $v$ of an edge $uv$, respectively. Then $B^*(m, n)$ is a parity signed signed graph if and only if $n=m$ or $n=m+1$.
\end{corollary}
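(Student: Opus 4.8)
The plan is to reduce the statement to Theorem \ref{th_bip}, which already characterizes connected negative homogeneous parity signed graphs as exactly those that are bipartite with parts differing in size by at most $1$. Since $B^*(m,n)$ is a tree (a double star), it is acyclic and hence vacuously balanced, and it is connected and negative homogeneous, so Theorem \ref{th_bip} applies directly. It then remains only to identify the bipartition of the underlying graph and read off the size condition.

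First I would exhibit the bipartition of the bistar. Two-colour the tree starting from $u$: its central neighbour $v$ together with the $m$ pendant vertices at $u$ receive one colour, while the $n$ pendant vertices at $v$ receive the colour of $u$. Thus one part is $V_1=\{u\}\cup\{\text{pendants of }v\}$ of size $n+1$, and the other is $V_2=\{v\}\cup\{\text{pendants of }u\}$ of size $m+1$. Because the bistar is connected, this bipartition is the only one, so by Theorem \ref{th_bip} the signed graph is parity signed precisely when $\bigl||V_1|-|V_2|\bigr|=|n-m|\le 1$.

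Finally I would translate $|n-m|\le 1$ into the stated cases. Its three solutions are $n=m-1$, $n=m$, and $n=m+1$; since the bistar is symmetric in its two centres, $B^*(m,n)\cong B^*(n,m)$, so we may take $n\ge m$, which absorbs $n=m-1$ into $n=m+1$ and leaves exactly $n=m$ or $n=m+1$. One could equally argue straight from Theorem \ref{char}: as every edge is negative and $B^*(m,n)$ is connected, any admissible partition must separate every edge and hence coincide with the bipartition above, so the balance condition reproduces the same inequality. I expect no real difficulty here, since Theorem \ref{th_bip} does the heavy lifting; the only point needing care is this symmetry reduction that discards the solution $n=m-1$, which is precisely why the statement lists only $n=m$ and $n=m+1$ rather than the full range $|n-m|\le 1$.
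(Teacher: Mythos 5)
Your proposal is correct and follows what is evidently the paper's intended derivation: the paper states this corollary without proof, immediately after remarking that Theorem \ref{th_bip} has found all negative homogeneous parity signed graphs, and your reduction to that theorem via the unique bipartition $\{u\}\cup N(v)$ and $\{v\}\cup N(u)$ of sizes $n+1$ and $m+1$ is exactly that route. You are also right to flag the $n=m-1$ case: the condition from Theorem \ref{th_bip} is $|n-m|\le 1$, and the corollary's phrasing ``$n=m$ or $n=m+1$'' is only accurate up to the isomorphism $B^*(m,n)\cong B^*(n,m)$, a point your symmetry reduction handles cleanly.
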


\begin{corollary}
	A signed $K_{1,n}$ is a parity signed graph if and only if it satisfies the following conditions:
	\begin{enumerate}
		\item[{\rm(a)}] $|E^-(K_{1,n})|=|E^+(K_{1,n})|$, if $n$ is even.
		\item[{\rm(b)}] $|E^-(K_{1,n})|=|E^+(K_{1,n})|+1$, if $n$ is odd.
	\end{enumerate}
\end{corollary}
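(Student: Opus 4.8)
The plan is to obtain this as an immediate consequence of Theorem \ref{thmstar}. Write $p=|E^+(K_{1,n})|$ and $q=|E^-(K_{1,n})|$ for the numbers of positive and negative edges; since $K_{1,n}$ has exactly $n$ edges, $p+q=n$. In the language of Theorem \ref{thmstar} this is the signed star $K_{1,p+q}$ carrying $p$ positive and $q$ negative edges, so that theorem tells us $K_{1,n}$ is a parity signed graph if and only if $q=p$, $q=p+1$, or $q=p+2$; equivalently $q-p\in\{0,1,2\}$. Everything then reduces to deciding which of these three values is compatible with the given parity of $n$.

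The key step is a parity bookkeeping on $n=p+q$. Since $q-p$ and $p+q$ differ by $2p$, they have the same parity, so $n\equiv q-p\pmod 2$. First I would treat $n$ odd: then $q-p$ is odd, and the only odd element of $\{0,1,2\}$ is $1$, forcing $q=p+1$, i.e. $|E^-(K_{1,n})|=|E^+(K_{1,n})|+1$, which is condition (b). Then I would treat $n$ even: now $q-p$ is even, so $q-p\in\{0,2\}$, giving $|E^-(K_{1,n})|=|E^+(K_{1,n})|$ or $|E^-(K_{1,n})|=|E^+(K_{1,n})|+2$.

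The main obstacle is precisely the even case. Condition (a) as written records only the possibility $q=p$, whereas the analysis above, inherited from Theorem \ref{thmstar}, also permits $q=p+2$; indeed the all-negative star $K_{1,2}$ (with $p=0$, $q=2$) is a parity signed graph by Theorem \ref{th_bip}, being bipartite with part sizes $1$ and $2$ that differ by $1$. So to establish the statement I would first settle whether (a) is intended to be read up to interchanging the roles of $E^+$ and $E^-$, or whether it should be strengthened to ``$|E^-(K_{1,n})|=|E^+(K_{1,n})|$ or $|E^-(K_{1,n})|=|E^+(K_{1,n})|+2$.'' Under the latter (and, I believe, intended) reading the corollary follows verbatim from the case analysis; under the former I would have to show that a canonical labelling, such as placing the largest odd label at the centre, forces $q=p$ and thereby reconciles the two formulations. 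In either reading the sufficiency direction is immediate: given the stated relation between $|E^+|$ and $|E^-|$, one distributes the odd and even labels between centre and leaves exactly as in the sufficiency argument of Theorem \ref{thmstar}, so the Harary bipartition has nearly equal sides.
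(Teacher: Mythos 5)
Your derivation is exactly the intended one: the paper gives no separate proof of this corollary, which is meant to fall out of Theorem \ref{thmstar} together with the observation that $|E^-|-|E^+|$ and $n=|E^-|+|E^+|$ have the same parity, and your bookkeeping in the odd case reproduces condition (b) precisely. More importantly, your diagnosis of the even case is correct: the corollary as printed is false, not merely ambiguous. The all-negative $K_{1,2}$ is parity signed (label the centre $2$ and the leaves $1$ and $3$; alternatively invoke Theorem \ref{th_bip} with parts of sizes $1$ and $2$), yet it has $|E^-|=|E^+|+2$, violating (a); the star $K_{1,4}$ with one positive and three negative edges is another witness, so the failure is not an isolated small case. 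Indeed the paper's own restatement of Theorem \ref{thmstar} explicitly allows $n=m+2$ when $m+n$ is even, so the corollary contradicts the very theorem it is drawn from, and condition (a) must be strengthened to ``$|E^-(K_{1,n})|=|E^+(K_{1,n})|$ or $|E^-(K_{1,n})|=|E^+(K_{1,n})|+2$,'' exactly as you propose.

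One caution about your suggested fallback: under your ``former reading'' there is nothing a canonical labelling could reconcile, because $|E^+|$ and $|E^-|$ are invariants of the given signed star and do not depend on which bijection realizes it as a parity signed graph; placing the largest odd label at the centre cannot convert $q=p+2$ into $q=p$. Nor does reading (a) up to interchanging $E^+$ and $E^-$ help, since that yields $|E^+|=|E^-|+2$, which still fails (a). So you should discard that branch of your plan and simply state the corrected clause; with it, your argument (necessity from Theorem \ref{thmstar} plus the parity constraint $n\equiv q-p \pmod 2$, sufficiency by distributing odd and even labels as in the sufficiency direction of Theorem \ref{thmstar}) is complete.
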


We have seen in Theorem \ref{thm_balanced}  that every parity signed graph is balanced. But the converse is not true. For example, positive homogeneous signed graphs do not admit parity labellings. Another example is given in Figure \ref{fig:balanced_not_psg}.

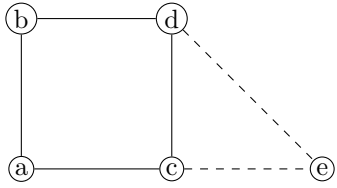
\begin{figure}[h!]\label{fig:balanced_not_psg}
	\centering
	\begin{tikzpicture}
	\vertex (u1) at (0,0) {a};
	\vertex (u3) at (2,0)  {c};
	\vertex (u4) at (2,2)  {d};
	\vertex (u2) at (0,2) {b};	
	\vertex (u5) at (4, 0) {e};
	\path 
	(u1) edge  (u2)
	(u2) edge  (u4)
	(u1) edge  (u3)
	(u3) edge  (u4)
	(u3) edge [dashed] (u5)
	(u4) edge [dashed] (u5)
	;
	\end{tikzpicture}
	\caption{A balanced signed graph but not a parity signed graph.}
\end{figure}

Thus, it is worth characterizing the balanced signed graphs that are parity signed graphs. This characterization is in Theorem \ref{char}.

%

\section{More on the \textit{rna} Number}

For a graph $G$ of order $n$, there are $n!$ bijective functions $f:V(G) \rightarrow \{1, 2, \dots, n\}$. Each of the bijective functions generates a parity signed graph from $G$. This is where we find the relevance of the \emph{rna} number given in Definition \ref{rna}. 
Previously the \emph{rna} number was treated in \cite{Ach}.  
We now assess the \emph{rna} numbers of some graphs.

\begin{theorem}\label{thm_star}
	For a star $K_{1,n}$, $\sigma^-(K_{1,n})=\lceil{\frac{n}{2}}\rceil$.
\end{theorem}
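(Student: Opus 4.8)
The plan is to exploit the special structure of a star: every edge is incident to the center, so the sign of each edge is governed entirely by the parity of the center's label relative to the leaf's label. First I would fix notation, noting that $K_{1,n}$ has $n+1$ vertices, so a bijection $f$ uses the label set $\{1,2,\dots,n+1\}$. Writing $c$ for the label that $f$ assigns to the center, an edge from the center to a leaf labelled $\ell$ is negative precisely when $c$ and $\ell$ have opposite parity. Hence $|E^-(S_f)|$ equals the number of leaves whose label has parity opposite to that of $c$.

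Next I would count the available labels by parity. Among $\{1,\dots,n+1\}$ there are $\lceil (n+1)/2\rceil$ odd labels and $\lfloor (n+1)/2\rfloor$ even labels. Since the center consumes exactly one label and the remaining $n$ labels all go to leaves, the number of negative edges depends only on the parity the center takes: if $c$ is odd then the negative edges are exactly the even-labelled leaves, of which there are $\lfloor (n+1)/2\rfloor$; if $c$ is even then the negative edges are the odd-labelled leaves, numbering $\lceil (n+1)/2\rceil$. Crucially, within a fixed parity choice for $c$ the distribution of the remaining labels among the leaves does not affect the count, so these two values exhaust the possible outcomes.

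To extract the \emph{rna} number I would minimize over the two cases. Because $\lfloor (n+1)/2\rfloor \le \lceil (n+1)/2\rceil$, assigning the center an odd label is optimal and yields $|E^-(S_f)| = \lfloor (n+1)/2\rfloor$; no labelling can do better, since every labelling falls into one of the two parity cases above. Finally I would simplify using the identity $\lfloor (n+1)/2\rfloor = \lceil n/2\rceil$, verified by splitting into the cases $n$ even and $n$ odd, which gives $\sigma^-(K_{1,n}) = \lceil n/2\rceil$. The optimal labelling can be exhibited explicitly for concreteness: give the center any odd label and distribute the rest arbitrarily.

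There is no serious obstacle here; the argument is a direct parity count rather than an optimization requiring cleverness. The only points demanding care are the bookkeeping of how many odd and even labels survive once the center's label is removed, and the final floor/ceiling identity — both entirely routine.
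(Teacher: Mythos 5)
Your proof is correct and follows essentially the same route as the paper: a direct parity count showing that the number of negative edges depends only on the parity of the center's label, with the two parity choices yielding $\lfloor (n+1)/2\rfloor$ and $\lceil (n+1)/2\rceil$ negative edges respectively. If anything, your observation that these two cases exhaust \emph{all} bijections is stated slightly more explicitly than in the paper, which verifies the count only for the two specific center labels $n$ and $n+1$ (though these do represent both parities), so your write-up is marginally more complete on that point.
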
 
\begin{proof}
	The number of negative edges differs depending on the label of the central vertex in a star. Hence, we analyse two cases.
	
	\textbf{Case 1}: $n$ is odd.
	
	Suppose the central vertex is labelled with $n+1$. There are $n$ pendant vertices which are labelled with 1,2,$\dots$,$n$. Hence, there are exactly $\frac{n-1}{2}$ pendant vertices labelled with even integers and $\frac{n+1}{2}$ pendant vertices labelled with odd integers. 
	As $n+1$ is even, there are $\frac{n+1}{2}$ negative edges.
	
	If we swap the labels $n$ and $n+1$, then the central vertex gets the label $n$. Now there are $\frac{n+1}{2}$ pendant vertices labelled with even integers and $\frac{n-1}{2}$ pendant vertices labelled with odd integers. Hence, there will be $\frac{n+1}{2}$ negative edges.
	
	\textbf{Case 2}: $n$ is even.
	
	If the pendant vertices are labelled with $1,2,\dots,n$ and the central vertex is labelled with $n+1$, there are exactly $\frac{n}{2}$ pendant vertices labelled with even integers and $\frac{n}{2}$ pendant vertices labelled with odd integers. As $n+1$ is odd, there are $\frac{n}{2}$ negative edges.
	
	If we swap the labels $n$ and $n+1$, then the central vertex gets the label $n$. Now, there are $\frac{n-2}{2}$ pendant vertices labelled with even integers and $\frac{n+2}{2}$ pendant vertices labelled with odd integers. Hence, there will be $\frac{n+2}{2}$ negative edges.
	
	In both cases $\sigma^-(K_{1,n})=\lceil{\frac{n}{2}}\rceil$.
\end{proof}

\begin{theorem} \label{path_cycle}
	Let $G$ be a path or cycle. The \emph{rna} number of a path of order at least $2$ is $1$.  The \emph{rna} number of a cycle is $2$. Further, $\sigma^-(G)=\sigma^+(G)$ if and only if $G$ is either $P_3$ or $C_4$.
\end{theorem}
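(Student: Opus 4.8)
The plan is to pass from labellings to cuts. By Definition \ref{parsign} and Theorem \ref{char}, every bijection $f:V(G)\to\{1,\dots,n\}$ splits $V(G)$ into its odd-labelled and even-labelled vertices, two sets whose sizes differ by at most $1$, and the negative edges of $S_f$ are exactly the edges joining the two sets; conversely, every partition of $V(G)$ into two nearly equal parts arises from such a labelling. Hence $\sigma^-(G)$ is precisely the minimum number of edges in a cut $(A,B)$ of $G$ with $\bigl||A|-|B|\bigr|\le 1$. I would establish this reformulation first, since both \emph{rna} computations then reduce to finding a smallest such cut.

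For a path $P_n=v_1v_2\cdots v_n$ with $n\ge 2$, the lower bound $\sigma^-(P_n)\ge 1$ holds because any cut into two nonempty parts of a connected graph contains at least one edge. For the upper bound I would exhibit the cut that places $v_1,\dots,v_{\lceil n/2\rceil}$ in $A$ and the remaining vertices in $B$: this is nearly balanced, and its only crossing edge is $v_{\lceil n/2\rceil}v_{\lceil n/2\rceil+1}$, so $\sigma^-(P_n)\le 1$. For a cycle $C_n$, traversing the cycle shows that the number of crossing edges of any partition into nonempty parts is even and positive, hence at least $2$, giving $\sigma^-(C_n)\ge 2$; taking $A$ to be an arc of $\lceil n/2\rceil$ consecutive vertices realizes a nearly balanced cut whose crossing edges are exactly the two edges joining the two arcs, so $\sigma^-(C_n)\le 2$.

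For the final equivalence I would use the identity $\sigma^+(G)=|E(G)|-\sigma^-(G)$: for any fixed $f$ one has $|E^+(S_f)|+|E^-(S_f)|=|E(G)|$, so the labelling minimizing the negative edges simultaneously maximizes the positive edges. Thus $\sigma^-(G)=\sigma^+(G)$ holds exactly when $\sigma^-(G)=|E(G)|/2$. Substituting $|E(P_n)|=n-1$ with $\sigma^-(P_n)=1$ forces $n=3$, and $|E(C_n)|=n$ with $\sigma^-(C_n)=2$ forces $n=4$, yielding $P_3$ and $C_4$ respectively.

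The argument is essentially routine once the cut reformulation is in place; the only step demanding a little care is the lower bound for the cycle, where I must argue that a nearly balanced cut cannot consist of a single edge. The parity-of-crossings observation, that every closed walk crosses the cut an even number of times, supplies this cleanly, and it is also what rules out an \emph{rna} number of $1$ for any cycle.
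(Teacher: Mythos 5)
Your proposal is correct and follows essentially the same route as the paper: the paper's proof simply asserts the \emph{rna} values as clear and quotes $\sigma^+(P_n)=\sigma^+(C_n)=n-2$, which is exactly your complementarity identity $\sigma^+(G)=|E(G)|-\sigma^-(G)$ in disguise. Your version merely supplies the details the paper omits --- the cut reformulation via Theorem \ref{char}, the explicit nearly balanced cuts, and the even-crossing lower bound for cycles --- all of which are sound.
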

\begin{proof}
	It is clear that the \emph{rna} numbers are as stated.  The \emph{adhika} numbers are $\sigma^+(P_n) = \sigma^+(C_n) = n-2$ for a path and cycle of order $n \geq 2$ (path) and $3$ (cycle).  This implies the second half of the theorem.  
\end{proof}

\begin{theorem}
	For a wheel $W_n$, $\sigma^-(W_n) = \lfloor{\frac{n+4}{2}}\rfloor$.
\end{theorem}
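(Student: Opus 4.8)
The plan is to translate the \emph{rna} number into a purely combinatorial minimum-cut problem and then optimize over the structure of the wheel. By Theorem \ref{char} together with Definition \ref{rna}, every parity labelling of $W_n$ corresponds to a partition of $V(W_n)$ into two parts $V_1, V_2$ with $\big||V_1|-|V_2|\big|\le 1$, the negative edges being exactly those with one endpoint in each part; conversely any such near-equipartition is realized by assigning odd labels to one part and even labels to the other. Hence $\sigma^-(W_n)$ equals the minimum number of edges crossing a partition of $V(W_n)$ into two parts whose sizes differ by at most $1$. I write $W_n$ as a hub $c$ joined to every vertex of a rim cycle $C_{n-1}$, so that $|V(W_n)|=n$ and the two parts have sizes $\lfloor n/2\rfloor$ and $\lceil n/2\rceil$.

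First I would split the crossing edges into \emph{spokes} (edges at $c$) and \emph{rim} edges. Fix a near-equipartition and say $c\in V_1$. Then the crossing spokes are exactly the spokes to rim vertices lying in $V_2$, so their number equals $|V_2|$, since every vertex of $V_2$ is a rim vertex. As $V_2$ is one of the two parts, $|V_2|\ge\lfloor n/2\rfloor$, so at least $\lfloor n/2\rfloor$ spokes cross. Next, because the rim is a cycle, the number of crossing rim edges is even; and it is nonzero unless all rim vertices lie on one side. But all rim vertices on the hub-free side would force $|V_1|=1$, and all on the hub side would force $V_2=\emptyset$, each violating balance for $n\ge4$. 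Hence at least $2$ rim edges cross, and adding the two bounds gives $\sigma^-(W_n)\ge \lfloor n/2\rfloor+2=\big\lfloor\tfrac{n+4}{2}\big\rfloor$.

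For the matching upper bound I would exhibit one labelling attaining equality. Place $c$ on the larger side together with a \emph{contiguous} arc of $\lceil n/2\rceil-1$ rim vertices, and put the complementary arc of $\lfloor n/2\rfloor$ rim vertices on the other side; this is a valid near-equipartition. The hub-free side then has exactly $\lfloor n/2\rfloor$ rim vertices, so exactly $\lfloor n/2\rfloor$ spokes cross, while two complementary arcs of a cycle meet in exactly two places, so exactly $2$ rim edges cross. The total is $\lfloor n/2\rfloor+2$, matching the lower bound.

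The only real obstacle is ensuring that the two separate minimizations---minimizing crossing spokes and minimizing crossing rim edges---can be met \emph{simultaneously}, and that the forced rim contribution is exactly $2$ rather than larger; both are handled by the contiguous-arc construction, which decouples the two counts. I would close by checking the boundary case $W_4=K_4$, where the formula gives $\lfloor 8/2\rfloor=4$ and any balanced $2$-$2$ split of $K_4$ cuts $4$ edges, and by noting that placing $c$ on the smaller side only increases the spoke count, so it can never beat the bound.
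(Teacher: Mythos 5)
Your proof is correct and takes essentially the same route as the paper's: both decompose $W_n$ into the spoke star $K_{1,n-1}$ and the rim cycle $C_{n-1}$, observe that the crossing spokes number exactly the size of the hub-free class (the paper's $\alpha$, minimized at $\lfloor n/2\rfloor$ by putting the hub on the larger side), bound the rim contribution below by $2$, and attain both bounds simultaneously with a contiguous arc. Your write-up is in fact somewhat more careful than the paper's, which asserts the rim minimum of $2$ without the evenness/nonemptiness argument you supply and does not explicitly verify that the two minimizations are compatible.
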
 
\begin{proof}
	A wheel  $W_n$ is the edge-disjoint union of $K_{1,~n-1}$ and  $C_{n-1}$.  Assume the central vertex has parity $p=0$ or $1$ (even or odd, respectively) and the opposite parity is $1-p$.  Let there be $\alpha$ vertices with parity $1-p$ and $n-\alpha$ with parity $p$.  There are $\alpha$ negative edges in $K_{1,~n-1}$ and a minimum of 2 negative edges in $C_{n-1}$, which is achieved by letting all its vertices with parity $p$ induce a path.  Thus, there are $\alpha+2$ negative edges.
	
	If $n$ is even, $\alpha = \frac{n}{2}$ for a parity labelling.  Thus, there are ${\frac{n+4}{2}}$ negative edges and that is the minimum possible.  Hence, $\sigma^-(W_n) = \lfloor{\frac{n+4}{2}}\rfloor$.
	
	If $n$ is odd, $\alpha = \frac{n-1}{2}$ or $\frac{n+1}{2}$.  The minimum is $\frac{n-1}{2}$, attained by choosing $p=1$ (the centre vertex has an odd label).  In this choice there are $\frac{n+3}{2}$ negative edges, hence, $\sigma^-(W_n) = \lfloor{\frac{n+4}{2}}\rfloor$.
	
	That concludes the proof.
\end{proof}

Are there parity signed graphs with a desired \emph{rna} number? We answer this question in the next theorem.

\begin{theorem}
	For any natural number $k$, there exists a parity signed graph $S$ with $\sigma^-(S)=k$.
\end{theorem}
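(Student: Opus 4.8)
The plan is to prove this existence result constructively by exhibiting, for each $k$, a single explicit graph whose rna number is exactly $k$, rather than arguing abstractly. The family of stars is ideally suited to this, because Theorem \ref{thm_star} already supplies the closed form $\sigma^-(K_{1,n}) = \lceil n/2 \rceil$, and the right-hand side is a surjection from the positive integers onto the positive integers.

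First I would fix a natural number $k \geq 1$ and set $n = 2k-1$, so that the underlying graph is $G = K_{1,2k-1}$. By Theorem \ref{thm_star} we have $\sigma^-(G) = \lceil (2k-1)/2 \rceil = k$. Next I would produce the signed graph itself: by Definition \ref{rna} the value $\sigma^-(G)$ is attained by some bijection $f$ (explicitly, the labelling in the proof of Theorem \ref{thm_star} that places the label $2k$ on the central vertex and distributes $1,\ldots,2k-1$ over the leaves), and the associated signed graph $S := S_f = (G,\sigma_f)$ is, by Definition \ref{par}, a parity signed graph. Its number of negative edges equals $\sigma^-(G) = k$, and since this labelling is optimal we have $\sigma^-(S) = k$, as required.

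I expect no serious obstacle here; the only points needing care are bookkeeping. One should confirm that $\lceil n/2 \rceil$ indeed ranges over all of $\{1,2,3,\ldots\}$ as $n$ ranges over the positive integers (it does, since both $n = 2k-1$ and $n = 2k$ yield $k$), so every target value is achievable and the construction is valid for all $k$. If the convention being used includes $k=0$, one would note separately that under the standing assumption that graphs are connected and nontrivial every cut with nearly equal sides must sever at least one edge, so $\sigma^- \geq 1$ and the intended range is $k \geq 1$. It is worth remarking that the identical conclusion follows from $K_{1,2k}$, giving an equally short realization; I would present the star family precisely because it yields this one-line proof.
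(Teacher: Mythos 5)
Your proposal is correct and follows essentially the same route as the paper, which simply observes that the star $K_{1,2k}$ has $\sigma^-(K_{1,2k})=k$ by Theorem \ref{thm_star}; your choice of $K_{1,2k-1}$ is an equally valid instance of the same idea, as you yourself note. The extra care you take in exhibiting an optimal labelling $f$ so that $S=S_f$ is genuinely a parity signed graph attaining the value is a small but welcome tightening of the paper's one-line argument.
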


\begin{proof}
	The star $K_{1,2k}$ of size $2k$ has $\sigma^-(K_{1,2k})=k.$ 
\end{proof}

\begin{theorem}\label{th111}
	Let $G$ be a (connected) graph.  We have $\sigma^-(G)=1$ if and only if $G$ has a cut-edge joining two graphs whose orders differ by at most one.
\end{theorem}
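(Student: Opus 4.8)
The plan is to recast $\sigma^-(G)$ as a balanced-cut minimization and then decide when that minimum equals $1$. By Definition \ref{parsign} and Definition \ref{rna}, every bijective labelling $f$ splits $V(G)$ into its even-labelled and odd-labelled vertices, of sizes $\lfloor n/2\rfloor$ and $\lceil n/2\rceil$, and the negative edges of $S_f$ are exactly the edges crossing this partition. Conversely, by the reasoning of Theorem \ref{char}, any partition $V(G)=V_1\cup V_2$ with $||V_1|-|V_2||\le 1$ is realized by such a labelling: assign the even integers to the part of size $\lfloor n/2\rfloor$ and the odd integers to the part of size $\lceil n/2\rceil$. Thus my first step is to record the equivalence that $\sigma^-(G)$ is the minimum number of edges crossing a partition of $V(G)$ into two parts whose sizes differ by at most $1$.

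Next I would dispose of the lower bound. If $G$ is connected on at least two vertices (the statement being vacuous for a single vertex), then any nearly balanced partition has both parts nonempty, and connectedness forces at least one crossing edge; hence $\sigma^-(G)\ge 1$ always. It therefore remains only to characterize when the minimum is attained by a cut of size exactly $1$.

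For sufficiency ($\Leftarrow$), suppose $e=xy$ is a cut-edge whose removal leaves components $G_1\ni x$ and $G_2\ni y$ with $||V(G_1)|-|V(G_2)||\le 1$. Setting $V_1=V(G_1)$ and $V_2=V(G_2)$ yields a nearly balanced partition crossed only by $e$, so $\sigma^-(G)\le 1$; combined with the lower bound this gives $\sigma^-(G)=1$.

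For necessity ($\Rightarrow$), assume $\sigma^-(G)=1$, so some nearly balanced partition $V(G)=V_1\cup V_2$ is crossed by a unique edge $e=xy$ with $x\in V_1$ and $y\in V_2$. The key point is that $e$ must be a cut-edge whose components are precisely $G[V_1]$ and $G[V_2]$: if any vertex of $V_1$ could reach a vertex of $V_2$ in $G-e$, that walk would use a second crossing edge, contradicting the uniqueness of $e$. Hence in $G-e$ no vertex of $V_1$ reaches any vertex of $V_2$, so $G-e$ is disconnected and, since $G$ was connected, has exactly the two components $G[V_1]$ and $G[V_2]$; their orders differ by at most $1$ by the balance condition. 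I expect the only delicate moment to be this verification that the single crossing edge is genuinely a bridge cleanly separating $V_1$ from $V_2$, but it is short and follows directly from edge-counting across the cut.
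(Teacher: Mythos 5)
Your proposal is correct and follows essentially the same route as the paper: both identify the negative edges of a parity labelling with the edges crossing the even/odd label bipartition (the reformulation of Theorem \ref{char}), prove necessity by showing the unique crossing edge must be a bridge separating the two parity classes, and prove sufficiency by labelling the two sides of the cut-edge with odd and even integers respectively. You merely make explicit two points the paper leaves implicit --- the lower bound $\sigma^-(G)\ge 1$ and the verification that the two components' orders differ by at most one --- which is a harmless tightening, not a different method.
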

\begin{proof}
	Assume that a graph $G$ has $\sigma^-(G)=1$ and let $S = (G,\sigma)$ be a parity signed graph in which all edges, except one, are positive. This is possible only if the end vertices of positive edges have labels of the same parity. Let the only negative edge have its end vertices $u$ and $v$ labelled $a$ and $b$, respectively. Without loss of generality, assume that $a$ is an odd integer and $b$ is an even integer. All the vertices connected to $u$ without passing through $v$ must have labels with the same parity as $a$ and all the vertices connected to $v$ without passing through $u$ must have labels with the same parity as $b$.  Hence, the edge $uv$ must be a cut-edge.
	
	For the converse, assume that the two components are of equal order.  For any even integer $n$, there exist equal numbers of odd and even integers between 1 and $n$. Hence, the odd integers can be used to label the vertices of one component exclusively and the vertices of the other component can be labelled exclusively with even integers.
	When $n$ is odd, then a similar arrangement will give two components whose orders differ exactly by 1. Clearly, $|E^-(G)|=1$ and this is the smallest possible. Hence, $\sigma^-(G)=1$.
\end{proof}

%

\section{Conclusion}
We have explored the balanced nature of parity signed graphs. We have also given some characterizations of parity signed graphs and the \emph{rna} number $\sigma^-(G)$ of some graphs. We have studied paths, cycles, stars and bistars admitting parity labelling. We have also investigated the effect of the \emph{rna} number on the structure of a signed graph.

For further studies on parity signed graphs, we propose some ideas. Let $S$ be a parity signed graph having a parity labelling $\mu:V(S) \rightarrow \{1, 2, \dots, |V(S)|\}$.
We define the parity complement, denoted as $\overline{S}_p$, of $S$ under the parity labelling $\mu$ as the complement of its underlying graph with the parity signs given by the same labelling $\mu$. Observe that $\overline{S}_p$ will also be a parity signed graph.

\begin{figure}[h!]\label{fig:pscomplement}
	\centering
	\begin{tikzpicture}
	\vertex (u1) at (0,0) {1};
	\vertex (u3) at (2,0)  {3};
	\vertex (u4) at (2,2)  {4};
	\vertex (u2) at (0,2) {2};	
	\vertex (u11) at (4,0) {1};
	\vertex (u33) at (6,0)  {3};
	\vertex (u44) at (6,2)  {4};
	\vertex (u22) at (4,2) {2};
	\path 
	(u1) edge [dashed] (u2)
	(u2) edge  (u4)
	(u3) edge [dashed] (u4)
	(u11) edge (u33)
	(u11) edge [dashed] (u44)
	(u33) edge [dashed] (u22)
	;
	\end{tikzpicture}
	\caption{A parity signed graph and its parity complement.}\label{img2}
\end{figure}
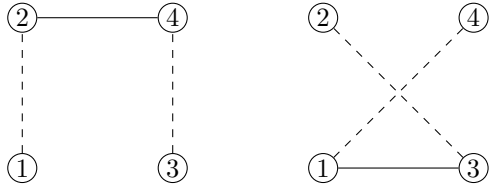

Some problems are:
\begin{enumerate}
	\item Characterize signed graphs whose line signed graphs are parity signed graphs.
	\item Characterize signed graphs $S$ such that  $\sigma^-(\overline{S}_p) = |E^-(\overline{S}_p)|$.
	\item What is the relation between $\sigma^-(S)+\sigma^-(\overline{S}_p)$ and $\sigma^-(S\cup \overline{S}_p)$?
\end{enumerate}

We now define cordiality in parity signed graphs. A parity signed graph $S$ is cordial if $||E^-(S)|-|E^+(S)||\le 1$. A parity signed graph $S$ is absolutely cordial if $|\sigma^-(S)-\sigma^+(S)|\le 1$.
The following problem is worth exploring. 

\textit{Characterize parity signed graphs that are absolutely cordial}.

\section*{Acknowledgment}
We thank all the participants of the Monthly Informal Group Discussion of Bengaluru conducted on the third Sunday of every month, mostly in Christ University, Bengaluru, for their continuous interaction and active participation in the Discrete Mathematics discussions.

\end{document}